\theoremstyle{plain}
\theoremstyle{plain}
\newtheorem{theorem}{Theorem}
\newtheorem{lemma}{Lemma}
\newtheorem{question}{Question}
\theoremstyle{definition}
\newtheorem{remark}{Remark}
\newcommand{\Szabo}{{Szab{\'o}} }
\newcommand{\R}{\ensuremath{\mathbb{R}}}
\newcommand{\Z}{\ensuremath{\mathbb{Z}}}
\newcommand{\Ztwo}{\ensuremath{\mathbb{Z}_2}}
\newcommand{\C}{\ensuremath{\mathbb{C}}}
\newcommand{\OO}{\ensuremath{\mathbb{O}}}
\newcommand{\XX}{\ensuremath{\mathbb{X}}}
\newcommand{\cP}{\ensuremath{\mathcal{P}}}
\newcommand{\Wedge}{\ensuremath{\Lambda}}
\newcommand{\Hom}{{\rm Hom}}
\newcommand{\SKh}{\ensuremath{\mbox{AKh}_{odd}}}
\newcommand{\CKh}{\ensuremath{\mbox{CKh}_{odd}}}
\newcommand{\ACKh}{\ensuremath{\mbox{ACKh}_{odd}}}
\newcommand{\sltwo}{\ensuremath{\mathfrak{sl}(2)}}
\newcommand{\gloneone}{\ensuremath{\mathfrak{gl}(1|1)}}
\newcommand{\gl}{\ensuremath{\mathfrak{gl}}}
\newcommand{\cobe}{\mathcal{C}ob^3_{\ell}(\emptyset)}
\newcommand{\cobchrono}{\Bbbk\!\operatorname{\mathbf{ChCob}}_{/\ell}(0)}
\newcommand{\End}{\mbox{End}}
\newcommand{\id}{\mbox{id}}
\DeclareFontFamily{U}{MnSymbolC}{}
\DeclareSymbolFont{MnSyC}{U}{MnSymbolC}{m}{n}
\DeclareFontShape{U}{MnSymbolC}{m}{n}{
    <-6>  MnSymbolC5
   <6-7>  MnSymbolC6
   <7-8>  MnSymbolC7
   <8-9>  MnSymbolC8
   <9-10> MnSymbolC9
  <10-12> MnSymbolC10
  <12->   MnSymbolC12}{}
\DeclareMathSymbol{\intprod}{\mathbin}{MnSyC}{'270}
\DeclareMathSymbol{\intprodrev}{\mathbin}{MnSyC}{183}
\author{J. Elisenda Grigsby}
\thanks{JEG was partially supported by NSF CAREER award DMS-1151671 and a grant from the Simons Foundation/SFARI (396324, JEG)}
\address{Boston College; Department of Mathematics; 5th floor Maloney; Chestnut Hill, MA 02467}
\email{grigsbyj@bc.edu}
\author{Stephan M. Wehrli}
\thanks{SMW was partially supported by NSF grant DMS-1111680}
\address{Syracuse University; Department of Mathematics; 215 Carnegie; Syracuse, NY 13244}
\email{smwehrli@syr.edu}
\title[An action of $\protect\gloneone$ on odd annular Khovanov homology]{An action of $\ensuremath{\boldsymbol{\mathfrak{gl}(1|1)}}$ on odd annular Khovanov homology}
\begin{document}
\bibliographystyle{plain}
\maketitle
\begin{abstract}
We define an annular version of odd Khovanov homology and prove that it
carries an action of the Lie superalgebra $\gloneone$ which is preserved under annular Reidemeister moves.\end{abstract}

\section{Introduction}
In \cite{K}, Khovanov defined a link invariant {\em categorifying} the Jones polynomial. That is, he constructed a bigraded homology theory of links whose graded Euler characteristic is the Jones polynomial. Recalling \cite{Witten, ReshTur} that the Jones polynomial has an interpretation involving representations of the quantum group $U_q(\sltwo)$, it is perhaps not surprising that an {\em annular} version of Khovanov homology defined in \cite{APS} and further studied in \cite{LRoberts} (see also \cite{AnnularLinks}) carries an action of the Lie algebra $\sltwo$ \cite{SchurWeyl, QR}. Moreover, although annular Khovanov homology is {\em not} a link invariant (it is well-defined only up to isotopy in the complement of a standardly-imbedded unknot in $S^3$), the algebraic features of the Khovanov complex that have yielded the most geometric/topological information (e.g. \cite{Plam, Rasmussen_Slice}) have coincided with key $\sltwo$--representation-theoretic features of the annular Khovanov complex (cf. \cite[Prop. 1]{dt}).

In \cite{ORS}, Ozsv{\'a}th-Szab{\'o}-Rasmussen defined an {\em odd} version of Khovanov homology. When taken with $\Ztwo$ coefficients, their construction agrees with Khovanov's original construction. Hence, odd Khovanov homology can be viewed as an alternative integral lift of Khovanov homology. 

Our aim in the present work is to define a natural {\em annular} version of {\em odd} Khovanov homology (Subsection \ref{subs:oddAKh}) and show that it carries a well-defined action, not of the Lie algebra $\sltwo$, but of the Lie superalgebra $\gl(1|1)$ (Theorems \ref{theorem:invariance} and \ref{thm:homequiv}). We will define this $\gl(1|1)$ action explicitly on chain level, using two different descriptions of the odd annular Khovanov complex (Subsections~\ref{subs:definition} and \ref{subs:alternative}).

In a follow-up paper with Casey Necheles, the second author extends results of Russell~\cite{Russell} to the odd setting by relating an annular version of Putyra's chronological cobordism category~\cite{PutyraChrono} to a dotted version of the odd Temperley-Lieb category (defined as in \cite{BrundanEllis}) at $\delta=0$. After setting dots equal to zero, the latter category becomes equivalent to a (non-full) subcategory of the category of $\gl(1|1)$ representations. As a consequence, one obtains a natural interpretation of the $\gl(1|1)$ action on odd annular Khovanov homology.

In a different direction, noting that: 
\begin{itemize}
	\item the bordered Heegaard-Floer tangle invariant defined by Petkova-V{\'e}rtesi \cite{pv} carries a categorical action of $U_q(\gl(1|1))$ \cite{epv},
	\item on a decategorified level, the bordered theory for knot Floer homology defined by Ozsv{\'a}th-Szab{\'o} \cite{oszKauffman} carries an action of $U_q(\gl(1|1))$ \cite{ManionDecat},
	\item conjecturally, there is an Ozsv{\'a}th-Szab{\'o} spectral sequence relating odd Khovanov homology of (the mirror of) a link to the Heegaard-Floer homology of the manifold obtained as the connected sum of the double-branched cover of $L$ with $S^1 \times S^2$ (cf. \cite{Sz, Be}),
\end{itemize}

it is natural to ask the following:
\begin{question} Let $L \subseteq Y$ be a link in a $3$--manifold satisfying either:
\begin{enumerate}
	\item $Y = S^3$ and $L= L_0 \cup L'$ where $L_0$ is an unknot and $L'$ is non-empty, or
	\item $Y$ is the double-branched cover of a knot $K \subseteq S^3$, and $L = p^{-1}(U)$ is the preimage of an unknot $U$ in $S^3 - K$.
\end{enumerate}
Does $\widehat{HFK}(Y,L)$, the knot Floer homology of $L$ in $Y$, carry an action of $\gl(1|1)$? In the latter case, how does this $\gl(1|1)$ action relate to the $\gl(1|1)$ action on $\SKh(K \subseteq S^3 - N(U))$?
\end{question}

For a link $L=L_0 \cup L'$ which is realized as the closure, $L'$, of a tangle $T$, linked by the tangle axis, $L_0$, Petkova-V{\'e}rtesi~\cite{pv1} showed that the knot Floer homology of $L$ can be identified with the Hochschild homology of the tangle Floer homology of $T$. In the case where $T$ is a tangle in $\mathbb{R}^2\times I$, the existence of a $\gl(1|1)$ action on $\widehat{HFK}(L)$ could therefore be established by showing that the
categorical $\gl(1|1)$ action described in \cite{epv} induces an action on Hochschild homology.

In this context, it is worth noting that the even version of annular Khovanov homology has been identified (see \cite{HochHom} for a special case and \cite{Beliakova-Putyra-Wehrli} for the general case) with the Hochschild homology of the Chen-Khovanov tangle invariant, which categorifies the Reshetikhin-Turaev tangle invariant associated to the fundamental representation of $U_q(\mathfrak{sl}(2))$.

\subsection{Acknowledgements} The authors would like to thank John Baldwin, Tony Licata, Robert Lipshitz, and Ina Petkova for interesting discussions.

\section{Preliminaries on $\gl(1|1)$ representations}\label{section:representations}
In this section, we will review basic facts of the representation theory of the Lie superalgebra $\gl(1|1)$. We will assume throughout that we are working over $\C$. By a vector superspace, we will mean a vector space $V$ endowed with a $\Ztwo$-grading $V=V_{\bar{0}}\oplus V_{\bar{1}}$. We will refer to this grading as the {\em supergrading} on $V$, and we will use the notation $|v|$ to denote the superdegree of a homogeneous element $v\in V$.

For $n\in\Z$, we will denote by $\langle n\rangle$ the shift functor which shifts the superdegree on a vector superspace by the image $\bar{n}$ of $n$ under the quotient map $\Z\rightarrow\Z_2$. Thus if $V$ is a vector superspace, then $V\langle n\rangle$ is the vector superspace with $(V\langle n\rangle)_{\bar{i}}=V_{\bar{i}+\bar{n}}$ for $\bar{i}\in\Ztwo$.

\subsection{Representations of Lie superalgebras}\label{subs:representations} Recall that a {\em Lie superalgebra} is a vector superspace $\mathfrak{g}=\mathfrak{g}_{\bar{0}}\oplus\mathfrak{g}_{\bar{1}}$ endowed with a bilinear {\em Lie superbracket} $[-,-]_s\colon\mathfrak{g}\times\mathfrak{g}\rightarrow\mathfrak{g}$ satisfying
\begin{enumerate}
\item
$|[x,y]_s|=|x|+|y|$,
\item
$[x,y]_s=-(-1)^{|x||y|}[y,x]_s$,
\item
$[x,[y,z]_s]_s=[[x,y]_s,z]_s+(-1)^{|x||y|}[y,[x,z]_s]_s$,
\end{enumerate}
for all homogeneous $x,y,z\in\mathfrak{g}$.

If $V$ is a vector superspace, then $\gl(V)$ denotes the Lie superalgebra whose underlying vector superspace is the space $\End(V)$ of all linear endomorphisms of $V$, and whose Lie superbracket is given by the supercommutator $
[x,y]_s:=x\circ y - (-1)^{|x||y|}y\circ x$ for all homogeneous $x,y\in\End(V)$. Here, it is understood that an endomorphism of $V$ has superdegree $\bar{0}$ if it preserves the supergrading on $V$, and superdegree $\bar{1}$ if it reverses the supergrading on $V$.
 
A {\em homomorphism} between two Lie superalgebras $\mathfrak{g}$ and $\mathfrak{g}^\prime$ is a linear map from $\mathfrak{g}$ to $\mathfrak{g}^\prime$ which preserves both the supergrading and the Lie superbracket. A {\em representation} of a Lie superalgebra $\mathfrak{g}$ is a vector superspace $V$ together with a homomorphism $\rho_V\colon\mathfrak{g}\rightarrow\gl(V)$. As with ordinary Lie algebras, the map $\rho_V$ is sometimes called the {\em action} of $\mathfrak{g}$ on $V$.

Let $V$ and $W$ be two representations of a Lie superalgebra $\mathfrak{g}$. Then $V\langle n\rangle$ is a representation of $\mathfrak{g}$ with $\rho_{V\langle n\rangle}=\rho_V$, and the dual space $V^*=\Hom(V,\C)$ is a representation of $\mathfrak{g}$ with $(V^*)_{\bar{i}}=\Hom(V_{\bar{i}},\C)$ for $\bar{i}\in\Z_2$ and
\[
\rho_{V^*}(x)(\varphi)=-(-1)^{|x||\varphi|}(\rho_V(x))^*(\varphi)
\]
for all homogeneous $x\in\mathfrak{g}$ and $\varphi\in V^*$. Moreover, the tensor product $V\otimes W$ is a representation of $\mathfrak{g}$ with $|v\otimes w|=|v|+|w|$ and
\[
\rho_{V\otimes W}(x)(v\otimes w) = (-1)^{|w||x|}(\rho_V(x)(v))\otimes w + v\otimes(\rho_W(x)(w))
\]
for all homogeneous $x\in\mathfrak{g}$, $v\in V$, and $w\in W$.

\begin{remark} In the literature, the action of $\mathfrak{g}$ on $V\otimes W$ is more commonly defined by
\[
\rho_{V\otimes W}(x)(v\otimes w) = (\rho_V(x)(v))\otimes w + (-1)^{|x||v|}v\otimes(\rho_W(x)(w))
\]
for all homogeneous $x\in\mathfrak{g}$, $v\in V$, and $w\in W$. It is easy to check that our definition yields an isomorphic representation, where the isomorphism is given by the endomorphism of $V\otimes W$ which sends a homogeneous element $v\otimes w$ to the element $(-1)^{|v||w|}v\otimes w$.
\end{remark}

\begin{remark}\label{remark:twist} The representations $V\otimes W$ and $W\otimes V$ are isomorphic where the isomorphism is given by the linear map $\tau\colon V\otimes W\rightarrow W\otimes V$ which sends a homogeneous element $v\otimes w$ to the element $(-1)^{|v||w|}w\otimes v$. We will henceforth call this map the {\em twist map}.
\end{remark}

Given two homogeneous linear maps $f\colon V\rightarrow V^\prime$ and $g\colon W\rightarrow W^\prime$ between vector superspaces, let $f\otimes g\colon V\otimes W\rightarrow V^\prime\otimes W^\prime$ denote the homogeneous linear map defined by
\[
(f\otimes g)(v\otimes w):=(-1)^{|g||v|}f(v)\otimes g(w)
\]
for all homogeneous $v\in V$ and $w\in W$. Using this definition, we have:

\begin{lemma}\label{lemma:tensor} If $\mathfrak{g}$ is a Lie superalgebra and $f\colon V\rightarrow V^\prime$ and $g\colon W\rightarrow W^\prime$ are homogeneous linear maps between $\mathfrak{g}$ representations which intertwine the actions of $\mathfrak{g}$, then the tensor product $f\otimes g\colon V\otimes W\rightarrow V^\prime\otimes W^\prime$ defined as above also intertwines the actions of $\mathfrak{g}$.
\end{lemma}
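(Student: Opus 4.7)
The plan is to verify the equality $(f \otimes g) \circ \rho_{V \otimes W}(x) = \rho_{V' \otimes W'}(x) \circ (f \otimes g)$ directly on a homogeneous element $v \otimes w$, for each homogeneous $x \in \mathfrak{g}$. The verification reduces to a careful bookkeeping of Koszul signs coming from three sources: (i) the sign $(-1)^{|g||v|}$ in the definition of $f \otimes g$, (ii) the sign $(-1)^{|w||x|}$ in the definition of the action $\rho_{V\otimes W}$, and (iii) the intertwining relations $f \circ \rho_V(x) = \rho_{V'}(x) \circ f$ and $g \circ \rho_W(x) = \rho_{W'}(x) \circ g$ provided by the hypothesis.

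For the left-hand side, I would first expand using the definition of $\rho_{V\otimes W}(x)$, obtaining
\[
(f\otimes g)\bigl((-1)^{|w||x|}\rho_V(x)(v)\otimes w + v\otimes\rho_W(x)(w)\bigr),
\]
and then apply $f\otimes g$ to each summand. The key observation is that in the first summand, the element $\rho_V(x)(v)$ sits in superdegree $|v|+|x|$, so applying the definition of $f\otimes g$ produces an extra sign $(-1)^{|g|(|v|+|x|)}$, while the second summand contributes $(-1)^{|g||v|}$. After substituting $f(\rho_V(x)(v)) = \rho_{V'}(x)(f(v))$ and $g(\rho_W(x)(w)) = \rho_{W'}(x)(g(w))$, the left-hand side becomes
\[
(-1)^{|w||x| + |g||v| + |g||x|}\,\rho_{V'}(x)(f(v))\otimes g(w)\; +\; (-1)^{|g||v|}\, f(v)\otimes \rho_{W'}(x)(g(w)).
\]

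For the right-hand side, I would first apply $(f\otimes g)(v\otimes w) = (-1)^{|g||v|} f(v)\otimes g(w)$ and then apply $\rho_{V'\otimes W'}(x)$. Since $g(w)$ has superdegree $|g|+|w|$, the action on $V' \otimes W'$ produces the sign $(-1)^{(|g|+|w|)|x|}$ on the first summand, yielding
\[
(-1)^{|g||v|+(|g|+|w|)|x|}\,\rho_{V'}(x)(f(v))\otimes g(w)\; +\; (-1)^{|g||v|}\, f(v)\otimes \rho_{W'}(x)(g(w)).
\]
A term-by-term comparison shows the two expressions agree, completing the proof.

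The calculation is entirely routine; there is no conceptual obstacle. The only mild pitfall is to keep track of \emph{which} parity exponents the various signs depend on: the sign in the action uses $|x|$ paired with the superdegree of whatever lies to its \emph{left} in the tensor product, while the sign in $f\otimes g$ uses the superdegree of $g$ paired with whatever lies to its left. Once the definitions are expanded consistently, the signs on the two matching terms balance, and the lemma may be viewed as the standard compatibility between the Koszul sign rule for tensor products of morphisms and the chosen sign convention for tensor products of representations.
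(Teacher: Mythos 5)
Your proposal is correct and is essentially the same direct Koszul-sign computation that the paper carries out; the only cosmetic difference is that the paper rewrites $\rho_{V'\otimes W'}(x)\circ(f\otimes g)$ step by step into $(f\otimes g)\circ\rho_{V\otimes W}(x)$, whereas you expand both compositions independently and compare them term by term. The sign bookkeeping in both is identical, with the matching term $(-1)^{|g||v|+|g||x|+|w||x|}$ arising on each side.
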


\begin{proof} Let $x\in\mathfrak{g}$, $v\in V$, and $w\in W$ be homogeneous. Then
\begin{align*}
\rho_{V^\prime\otimes W^\prime}&(x)\Bigl[(f\otimes g)(v\otimes w)\Bigr]=(-1)^{|g||v|}\rho_{V^\prime\otimes W^\prime}(x)\Bigl[f(v)\otimes g(w)\Bigr]\\
&=(-1)^{|g||v|}\Bigl[(-1)^{(|g|+|w|)|x|}\rho_{V^\prime}(x)(f(v))\otimes g(w)+f(v)\otimes \rho_{W^\prime}(x)(g(w))\Bigr]\\
&=(-1)^{|g||v|}\Bigl[(-1)^{(|g|+|w|)|x|}f(\rho_{V}(x)(v))\otimes g(w)+f(v)\otimes g(\rho_{W}(x)(w))\Bigr]\\
&=(-1)^{|g||v|+|g||x|+|w||x|}f(\rho_V(x)(v))\otimes g(w)+(-1)^{|g||v|}f(v)\otimes g(\rho_W(x)(w))\\
&=(-1)^{|w||x|}(-1)^{|g|(|x|+|v|)}f(\rho_V(x)(v))\otimes g(w)+(-1)^{|g||v|}f(v)\otimes g(\rho_W(x)(w))\\
&=(-1)^{|w||x|}(f\otimes g)(\rho_V(x)(v)\otimes w)+(f\otimes g)(v\otimes\rho_W(x)(w))\\
&=(f\otimes g)\Bigl[(-1)^{|w||x|}\rho_V(x)(v)\otimes w+v\otimes\rho_W(x)(w)\Bigr]\\
&=(f\otimes g)\Bigl[\rho_{V\otimes W}(x)(v\otimes w)\Bigr].
\end{align*}
Hence $f\otimes g$ intertwines the maps $\rho_{V\otimes W}(x)$ and $\rho_{V^\prime\otimes W^\prime}(x)$, which proves the lemma.
\end{proof}

For a vector superspace $V$, let $\Phi_V$ denote the linear involution $\Phi_V\colon V\rightarrow V$ defined by $\Phi_V(v)=(-1)^{|v|}$ for every homogeneous element $v\in V$. The following lemma describes how the grading shift functor $\langle 1\rangle$ interacts with duals and tensor products of representations.

\begin{lemma} Let $V$ and $W$ be two representations of a Lie superalgebra $\mathfrak{g}$. Then \[(V\langle 1\rangle)^*\cong V^*\langle 1\rangle,\] where the isomorphism is given by the map $\Phi_{V^*}\colon V^*\rightarrow V^*$, and
\[
V\langle 1\rangle\otimes W\cong (V\otimes W)\langle 1\rangle\cong V\otimes W\langle 1\rangle,
\]
where the first isomorphism is given by the identity map of $V\otimes W$ and the second isomorphism is given by the map $\Phi_V\otimes\id_W$.
\end{lemma}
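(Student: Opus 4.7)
The plan is to establish each of the three isomorphisms in the same two-step fashion: first check that the two sides agree as vector superspaces (so the stated map is a well-defined linear endomorphism of the common underlying space), then verify $\mathfrak g$-equivariance by direct comparison of the action formulas from Subsection~\ref{subs:representations}.

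For the dual isomorphism, I would begin by noting that $(V\langle 1\rangle)^*_{\bar i}=\Hom(V_{\bar i+\bar 1},\C)=(V^*\langle 1\rangle)_{\bar i}$, so the two vector superspaces coincide, and a functional $\varphi$ of degree $\bar j$ in $V^*$ has degree $\bar j+\bar 1$ when viewed in $(V\langle 1\rangle)^*$. Since $\rho_{V\langle 1\rangle}=\rho_V$, plugging this shifted degree of $\varphi$ into the defining formula for the dual action yields
\[\rho_{(V\langle 1\rangle)^*}(x)(\varphi)=(-1)^{|x|}\rho_{V^*}(x)(\varphi).\]
The sign $(-1)^{|\varphi|}$ introduced by $\Phi_{V^*}$ then compensates for this discrepancy, and both sides of the intertwining condition reduce to $(-1)^{|x|+|\varphi|}\rho_{V^*}(x)(\varphi)$.

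For the first tensor product isomorphism, the same kind of index bookkeeping shows that $V\langle 1\rangle\otimes W$ and $(V\otimes W)\langle 1\rangle$ have identical underlying vector superspaces. The key point is then that the Koszul sign $(-1)^{|w||x|}$ appearing in
\[\rho_{V\otimes W}(x)(v\otimes w)=(-1)^{|w||x|}\rho_V(x)(v)\otimes w+v\otimes\rho_W(x)(w)\]
involves only the superdegree of $w$ in $W$, which is not altered by shifting $V$; hence the identity map is automatically $\mathfrak g$-equivariant.

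For the second tensor product isomorphism, shifting $W$ increases the degree of $w$ by $\bar 1$, so the first summand of the action on $V\otimes W\langle 1\rangle$ picks up an extra factor of $(-1)^{|x|}$ relative to the action on $(V\otimes W)\langle 1\rangle$. I would then check both sides of the intertwining condition for $\Phi_V\otimes\id_W$: applying $\Phi_V\otimes\id_W$ to $\rho_V(x)(v)\otimes w$ produces the sign $(-1)^{|x|+|v|}$, because $|\rho_V(x)(v)|=|x|+|v|$, while applying the shifted action after $\Phi_V\otimes\id_W$ produces the sign $(-1)^{|v|}\cdot(-1)^{|x|}=(-1)^{|x|+|v|}$, so the first-summand signs agree; the second summand acquires only the common factor $(-1)^{|v|}$ on both sides. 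The main obstacle throughout is careful bookkeeping of which grading on the shared underlying vector space is in force at each stage and of the Koszul signs built into $\otimes$; no step requires a deep calculation once the sign conventions are pinned down.
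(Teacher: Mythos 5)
Your proposal is correct on all three parts, and for the first two isomorphisms it is essentially the paper's own argument: the dual case is a direct sign computation showing both sides of the intertwining relation reduce to $(-1)^{|x|+|\varphi|}\rho_{V^*}(x)(\varphi)$, and the first tensor isomorphism follows because $\rho_{V\langle 1\rangle\otimes W}=\rho_{V\otimes W}=\rho_{(V\otimes W)\langle 1\rangle}$ (your observation that the Koszul sign only sees $|w|$ is precisely the reason this equality holds).

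For the second tensor isomorphism, $V\otimes W\langle 1\rangle\cong(V\otimes W)\langle 1\rangle$, you take a genuinely different route. You compute directly: shifting $W$ changes the sign in the first summand of the action by $(-1)^{|x|}$, and you then verify that $\Phi_V\otimes\id_W$ absorbs that discrepancy by comparing the two sides term by term. The paper instead reduces to the already-proved case by composing with the twist map of Remark~\ref{remark:twist}: it writes the candidate isomorphism as $\tau\circ\operatorname{id}\circ\tau$, a chain $V\otimes W\langle 1\rangle\to W\langle 1\rangle\otimes V\to(W\otimes V)\langle 1\rangle\to(V\otimes W)\langle 1\rangle$, and then checks by a one-line sign count that this composite equals $\Phi_V\otimes\id_W$. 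Your direct computation is more elementary and self-contained; the paper's argument is slicker because it re-uses structure already established and avoids re-deriving the intertwining identity from scratch, at the mild cost of having to recognize the composite as $\Phi_V\otimes\id_W$. Both are valid, and your sign bookkeeping in the crucial first summand is correct.
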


\begin{proof} Let $x\in\mathfrak{g}$ and $\varphi\in V^*$ be homogeneous. Then the definition of $\Phi_{V^*}$ implies
\[
(\Phi_{V^*}\circ\rho_{V^*}(x))(\varphi) = (-1)^{|x|+|\varphi|}(\rho_{V^*}(x))(\varphi) = (-1)^{|x|}(\rho_{V^*}(x)\circ\Phi_{V^*})(\varphi),
\]
where the first equation follows because $\rho_{V^*}(x)(\varphi)$ has superdegree $|x|+|\varphi|$.
Because $\rho_{V^*}=\rho_{V^*\langle 1\rangle}$, the left-most term in the above sequence of equations can be identified with $(\Phi_{V^*}\circ\rho_{V^*\langle 1\rangle}(x))(\varphi)$, and because
\begin{align*}
(-1)^{|x|}\rho_{V^*}(x)(\varphi)&= -(-1)^{|x|}(-1)^{|x||\varphi|}(\rho_V(x))^*(\varphi)\\
&=-(-1)^{|x|(|\varphi|+1)}(\rho_{V\langle 1\rangle}(x))^*(\varphi)\\
&=\rho_{(V\langle 1\rangle)^*}(x)(\varphi),
\end{align*}
the right-most term can be identified with $(\rho_{(V\langle 1\rangle)^*}(x)\circ\Phi_{V^*})(\varphi)$. Thus we have
\[
\Phi_{V^*}\circ\rho_{V^*\langle 1\rangle}(x)=\rho_{(V\langle 1\rangle)^*}(x)\circ\Phi_{V^*},
\]
and hence $\Phi_{V^*}$ is an isomorphism between $V^*\langle 1\rangle$ and $(V\langle 1\rangle)^*$.

The claim that the identity map of $V\otimes W$ is an isomorphism between $V\langle 1\rangle\otimes W$ and $(V\otimes W)\langle 1\rangle$ follows because $\rho_{V\langle 1\rangle\otimes W}=\rho_{V\otimes W}=\rho_{(V\otimes W)\langle 1\rangle}$, by our definition of the tensor product of two representations.

Finally, by Remark~\ref{remark:twist}, we have a sequence of isomorphisms
\[
V\otimes W\langle 1\rangle\stackrel{\tau}{\longrightarrow}W\langle 1\rangle\otimes V\stackrel{id}{\longrightarrow}
(W\otimes V)\langle 1\rangle\stackrel{\tau}{\longrightarrow}(V\otimes W)\langle 1\rangle.
\]
This sequence takes a homogeneous element $v\otimes w\in V\otimes W$ to
\[
(-1)^{|v|(|w|+1)}(-1)^{|w||v|}v\otimes w=(-1)^{|v|}v\otimes w=(\Phi_V\otimes\id_W)(v\otimes w),
\]
and hence $\Phi_V\otimes\id_W$ is an isomorphism between $V\otimes W\langle 1\rangle$ and $(V\otimes W)\langle 1\rangle$.
\end{proof}

\subsection{The Lie superalgebra $\mathbf{\mathfrak{gl}(1|1)}$ and some of its representations}
Let $\C^{1|1}$ denote the vector superspace \[\C^{1|1}:=\C v_0\oplus\C v_1\] spanned by two homogeneous elements $v_0$ and $v_1$ of superdegrees $\bar{0}$ and $\bar{1}$, respectively. The Lie superalgebra $\gl(1|1)$ is defined as the
space of linear endomorphisms $\gl(1|1)=\gl(\C^{1|1})=\End(\C^{1|1})$ with Lie superbracket given by the supercommutator, as described in the previous subsection. Explicitly, $\gl(1|1)$ is spanned by the following elements
\[
h_1=\begin{pmatrix}1&0\\0&0\end{pmatrix},\quad h_2=\begin{pmatrix}0&0\\0&1\end{pmatrix},\quad
e=\begin{pmatrix}0&1\\0&0\end{pmatrix},\quad f=\begin{pmatrix}0&0\\1&0\end{pmatrix},
\]
where $h_1$ and $h_2$ have superdegree $\bar{0}$ and $e$ and $f$ have superdegree $\bar{1}$. The Lie superbracket on $\gl(1|1)$ is given by
\[\arraycolsep=10pt
\begin{array}{ll}
[e,f]_s=h_1+h_2, &[e,e]_s=[f,f]_s=[h_i,h_j]_s=0,\\ \relax
[e,h_1]_s=-e, &[f,h_1]_s=f,\\ \relax
[e,h_2]_s=e, &[f,h_2]_s=-f,\\
\end{array}
\]
where $i,j\in\{1,2\}$.

Let $h_+:=h_1+h_2$ and $h_-:=h_1-h_2$. Then $h_+$ is central in $\gl(1|1)$, in the sense that $[h_+,x]_s=0$ for all $x\in\gl(1|1)$, and $h_-$ satisfies
\[
[e,h_-]_s=-2e,\quad [f,h_-]_s=2f,\quad [h_-,h_-]_s=0.
\]
Note that the elements $h_+$, $h_-$, $e$, and $f$ form a basis for $\gl(1|1)$ and that the Lie superbracket on $\gl(1|1)$ is completely characterized by the aforementioned properties of $h_+$ and $h_-$ and by the relations $[e,f]_s=h_+$ and $[e,e]_s=[f,f]_s=0$.

We will now describe a family of irreducible $\gl(1|1)$ representations $L_{(m,n)}$ parameterized by pairs of integers $(m,n)\in\Z^2$. It is not hard to see that every finite-dimensional irreducible $\gl(1|1)$ representation on which $h_1$ and $h_2$ act with integer eigenvalues is isomorphic to one of the representations in this family, up to a possible shift of the supergrading. See \cite{BM} and \cite{Sar}.

If $(m,n)\in\Z^2$ satisfies $m+n=0$, then $L_{(m,n)}$ is $1$-dimensional and supported in superdegree $\bar{0}$. The elements $(h_1,h_2)$ act by scalar multiplication by $(m,n)$ on this representation, and the elements $e$ and $f$ act by zero. Note that $L_{(0,0)}$ is a trivial representation, where trivial means that all generators of $\gl(1|1)$ act by zero.

If $(m,n)\in\Z^2$ satisfies $m+n\neq 0$, then $L_{(m,n)}$ is $2$-dimensional and spanned by two homogeneous vectors $v_+$ and $v_-$ of superdegrees $\bar{n}$ and $\bar{n}+\bar{1}$, respectively. In this case, the action of $\gl(1|1)$ relative to the basis $\{v_+,v_-\}$ is given by the following matrices:
\[
\begin{array}{clcclc}
\rho_{L_{(m,n)}}(h_1)&=&\begin{pmatrix}m&0\\0&m-1\end{pmatrix},\quad& \rho_{L_{(m,n)}}(h_2)&=&\begin{pmatrix}n&0\\0&n+1\end{pmatrix},\\[0.2in]
\rho_{L_{(m,n)}}(e)&=&\begin{pmatrix}0&m+n\\0&0\end{pmatrix},\quad& \rho_{L_{(m,n)}}(f)&=&\begin{pmatrix}
0&0\\1&0\end{pmatrix}.
\end{array}
\]

Note that for $m=1$ and $n=0$, the above matrices coincide with the matrices $h_1$, $h_2$, $e$, and $f$. Thus, the representation $L_{(1,0)}$ is equal to the fundamental representation $\C^{1|1}$ of $\gl(1|1)$, on which $\gl(1|1)$ acts by $\rho_{\C^{1|1}}=\id_{\gl(1|1)}$. In the remainder of this section, we will denote the fundamental representation $L_{(1,0)}=\C^{1|1}$ by $V$. The dual representation, $V^*$, can be seen to be isomorphic to the representation $L_{(0,-1)}$, where an isomorphism is given by the linear map which sends the basis $\{v_+^*,v_-^*\}$ of $V^*=L_{(1,0)}^*$ to the basis $\{v_-,v_+\}$ of $L_{(0,-1)}$.

In the next subsection, we will see that the representations $V^*\otimes V$ and $V\otimes V^*$ each contain a trivial $1$-dimensional subrepresentation and a trivial $1$-dimensional quotient representation, but no trivial direct summand. In particular, these representations are indecomposable but not irreducible. We will actually study the isomorphic (up to a grading shift) representations $V^*\langle 1\rangle\otimes V$ and $V\otimes (V^*\langle 1\rangle)$ since these representations will be needed later in this paper.

\subsection{$\mathbf{\mathfrak{gl}(1|1)}$ action on the representations $V^*\langle 1\rangle\otimes V$ and $V\otimes(V^*\langle 1\rangle)$}\label{subs:productaction} As before, let $V$ denote the fundamental representation of $\gl(1|1)$. For reasons that will become clear later, we will identify the vector superspaces underlying the representations $V^*\langle 1\rangle$ and $V$ by using the identifications $v_+^*=-v_-$ and $v_-^*=v_+$. It is not hard to see that, under these identifications, the action of $\gl(1|1)$ on $V^*\langle 1\rangle\otimes V$ is given as follows:
\[
\begin{tikzcd}
& v_-\otimes v_+-v_+\otimes v_- & \\
v_+\otimes v_+ \arrow{ur}{f=-1} \arrow[loop left]{l}{(h_1,h_2)=(1,-1)}& & v_-\otimes v_- \arrow{ul}[swap]{e=-1} \arrow[loop right]{r}{(h_1,h_2)=(-1,1)}\\
& v_+\otimes v_- + v_-\otimes v_+ \arrow{ul}{e=2} \arrow{ur}[swap]{f=-2} &
\end{tikzcd}
\]

For example, $e$ sends the vector $v_+\otimes v_- + v_-\otimes v_+$ to $2v_+\otimes v_+$ and $f$ sends this vector to $-2v_-\otimes v_-$. Likewise, $h_1$ and $h_2$ annihilate the vectors $v_+\otimes v_-$ and $v_-\otimes v_+$ and act on the vectors $v_+\otimes v_+$ and $v_-\otimes v_-$ by scalar multiplication by $(h_1,h_2)=(1,-1)$ and $(h_1,h_2)=(-1,1)$, respectively. Since there are no arrows ending at the vector $v_+\otimes v_-+v_-\otimes v_+$, this vector spans a trivial $1$-dimensional quotient representation, and since there are no arrows emanating from $v_-\otimes v_+-v_+\otimes v_-$, this vector spans a trivial $1$-dimensional subrepresentation. Let $p\colon V^*\langle 1\rangle\otimes V\rightarrow \C\langle 1\rangle$ and $i\colon\C\langle 1\rangle\rightarrow V^*\langle 1\rangle\otimes V$ denote the associated projection and inclusion maps, where $\C$ denotes the trivial $1$-dimensional representation $\C=L_{(0,0)}$. Explicitly, $p$ sends $v_+\otimes v_-$ and $v_-\otimes v_+$ to $1$ and $v_+\otimes v_+$ and $v_-\otimes v_-$ to zero, and $i$ sends $1$ to $v_-\otimes v_+-v_+\otimes v_-$. For later use, we also the introduce maps
\[
\tilde{p}\colon V^*\langle 1\rangle\otimes V\longrightarrow\C\oplus(\C\langle 1\rangle),\qquad\tilde{i}\colon \C\oplus(\C\langle 1\rangle)\longrightarrow V^*\langle 1\rangle\otimes V,
\]
defined as the compositions
\[
V^*\langle 1\rangle\otimes V\stackrel{p}{\longrightarrow}\C\langle 1\rangle\stackrel{i_2}{\longrightarrow}\C\oplus(\C\langle 1\rangle),\qquad
\C\oplus(\C\langle 1)\rangle\stackrel{p_1}{\longrightarrow}\C\langle 1\rangle\stackrel{i}{\longrightarrow}V^*\langle 1\rangle\otimes V,
\]
respectively, where $i_2\colon\C\langle 1\rangle\rightarrow\C\oplus(\C\langle 1\rangle)$ is the inclusion of $\C\langle 1\rangle$ into the second summand, and $p_1\colon \C\oplus(\C\langle 1\rangle)\rightarrow\C\langle 1\rangle$ is the projection onto the first summand, up to a shift of the supergrading. By construction, $\tilde{p}$ and $\tilde{i}$ intertwine the action of $\gl(1|1)$ and have superdegrees $\bar{0}$ and $\bar{1}$, respectively.

The action of $\gl(1|1)$ on $V\otimes (V^*\langle 1\rangle)$ is given by almost the same diagram as the one above, with the only difference being that the arrow labeled $f=-1$ is replaced by an arrow labeled $f=1$ and the arrow labeled $f=-2$ is replaced by an arrow labeled $f=2$. In particular, there are maps $\tilde{p}\colon V\otimes (V^*\langle 1\rangle)\rightarrow\C\oplus(\C\langle 1\rangle)$ and $ \tilde{i}\colon \C\oplus(\C\langle 1\rangle)\rightarrow V\otimes(V^*\langle 1\rangle)$ which are defined by the same formulas as before and which intertwine the action of $\gl(1|1)$.

\subsection{$\mathbf{\mathfrak{gl}(1|1)}$ actions on the exterior algebra of a vector space}\label{subs:exterior_action} We end this section with a natural construction of $\gl(1|1)$ actions on the exterior algebra of a vector space.

Let $U$ be a vector space equipped with a symmetric bilinear inner product $\langle-,-\rangle$ and let $U^\prime:=\Wedge^*(U)$ be its exterior algebra. We may regard $U^\prime$ as a vector superspace by collapsing the natural $\Z_{\geq 0}$-grading on the exterior algebra to a $\Z_2$-grading. Choose two vectors $a,b\in U$ and a constant $N\in\C$, and define a linear map $\rho_{U^\prime}\colon\gl(1|1)\rightarrow\gl(U^\prime)$ by
\[
\begin{array}{rcl}
\rho_{U^\prime}(h_+)(v)&=&\langle a,b\rangle v,\\
\rho_{U^\prime}(h_-)(v)&=&(N-2\ell)v,\\
\rho_{U^\prime}(e)(v)&=&a\intprod v,\\
\rho_{U^\prime}(f)(v)&=&b \wedge v,\\
\end{array}
\]
where $v\in U^\prime$ is an element of the form $v=u_1\wedge\ldots\wedge u_\ell$ for $u_j\in U$ and $\ell\geq 0$, and $\intprod$ is defined by
\[
u\intprod v=\sum_{j=1}^\ell(-1)^{j-1}\langle u,u_j\rangle u_1\wedge\ldots\wedge\widehat{u_j}\wedge\ldots\wedge u_\ell
\]
for $u\in U$ and $v\in U^\prime$ as before. Using this definition of $\intprod$, one can see that
\[
a\intprod(b\wedge v)+b\wedge(a\intprod v)=\langle a,b\rangle v
\]
for all $v\in U^\prime$. In particular, this implies that the map $\rho_{U^\prime}$ is compatible with the $\gl(1|1)$ relation $[e,f]_s=h_+$. Since $\rho_{U^\prime}(h_+)$ is given by multiplication by the constant $\langle a,b\rangle$, it is further clear that $\rho_{U^\prime}(h_+)$ is central in $\gl(U^\prime)$. Moreover, the map $v\mapsto a\intprod v$ (resp., $v\mapsto b\wedge v$) lowers (resp., raises) the number of factors in a wedge product by one, and together with the definition of $\rho_{U^\prime}(h_-)$, this implies that $\rho_{U^\prime}$ respects the $\gl(1|1)$ relations $[e,h_-]_s=-2e$ and $[f,h_-]_s=2f$. It is easy to see that $\rho_{U^\prime}$ is also compatible with the $\gl(1|1)$ relations $[e,e]_s=[f,f]_s=[h_-,h_-]_s=0$, and hence $\rho_{U^\prime}$ endows $U^\prime$ with a well-defined action of $\gl(1|1)$.

One can slightly modify the above definition of $\rho_{U^\prime}$ by replacing $b\wedge v$ by $v\wedge b$ and $a\intprod v$ by $v\intprodrev a$,  where $v\intprodrev a:= (-1)^{\ell-1} a\intprod v$ for $v\in U^\prime$ as before. This modified definition also yields a well-defined action of $\gl(1|1)$ on $U^\prime$.

\section{Odd annular Khovanov homology as a $\gl(1|1)$ module}
\subsection{Odd annular Khovanov homology} \label{subs:oddAKh}
Let $A$ be a closed, oriented annulus, $I = [0,1]$ the closed, oriented unit interval. Via the identification 
\[A \times I= \{(r,\theta,z)\,\,\vline\,\,r \in [1,2], \theta \in [0, 2\pi), z\in [0,1]\} \subset (S^3 = \R^3 \cup \infty) ,\] any link, $L \subset A \times I$, may naturally be viewed as a link in the complement of a standardly imbedded unknot, $(U = z$--axis $\cup \,\,\infty) \subset S^3$. Such an {\em annular link} $L \subset A \times I$ admits a diagram, $\cP(L) \subset A,$ obtained by projecting a generic isotopy class representative of $L$ onto $A \times \{1/2\}$.  From this diagram one can construct a triply-graded chain complex called the {\em annular} Khovanov complex associated to the annular link $L$, by using a version of Khovanov's original construction \cite{K} due to Asaeda-Przytycki-Sikora \cite{APS} and L. Roberts \cite{LRoberts} (see also \cite{AnnularLinks}). We now proceed to describe an odd version of the annular Khovanov complex, using the construction of Ozsv{\'a}th-Rasmussen-\Szabo in \cite{ORS}. 

Begin by decorating the diagram with an arrow (called an {\em orientation} in \cite{ORS}) at each crossing, as follows. Position the crossing so its overstrand connects the upper left (NW) to the lower right (SE) corner and draw an arrow on the crossing pointing either up or down. This choice will specify arrows for the two resolutions of the crossing, as follows. The arrow for the ``0" resolution will agree with the arrow at the crossing, and the arrow for the ``1" resolution will be rotated $90^\circ$ clockwise. See Figure \ref{fig:arrows}. 

\begin{figure}
\includegraphics[height=0.85in]{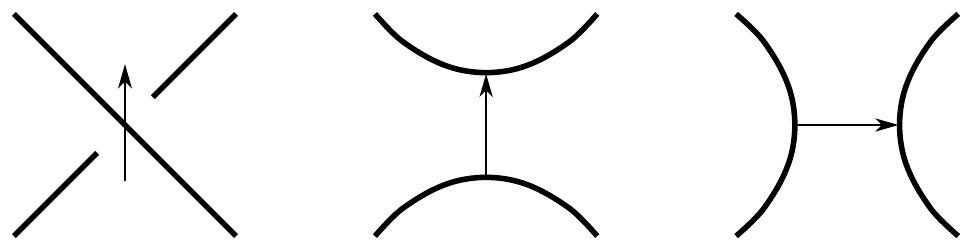}
\caption{Direction of the arrows in the ``0'' resolution and the ``1'' resolution.}
\label{fig:arrows}
\end{figure}

Now view the decorated diagram $\cP(L) \subset A$ instead as a diagram on $S^2 - \{\XX,\OO\}$, where $\XX$ (resp., $\OO$) are basepoints on $S^2$ corresponding to the inner (resp., outer) boundary circles of $A$. If we temporarily forget the data of $\XX$, we may view $\cP(L)$ as a diagram on $\R^2 = S^2 - \{\OO\}$ and form the ordinary bigraded odd Khovanov complex
\[\CKh(\cP(L)) = \bigoplus_{(i,j) \in \Z^2} \CKh^i(\cP(L);j)\] as described in \cite{ORS} and briefly recalled below.

Let $\mathcal{X}$ denote the set of crossings of $\cP(L)$. For each map $I: \mathcal{X} \rightarrow \{0,1\}$ one obtains an associated decorated imbedded $1$--manifold, $\cP^I(L) \subseteq S^2 - \{\OO\}$ obtained by resolving and decorating each crossing as specified by $I$.  Choosing an ordering of the $c$ crossings identifies these decorated complete resolutions with the vertices of a $c$--dimensional hypercube whose edges correspond to saddle cobordisms between decorated complete resolutions. 

Remembering the data of $\XX$, we now associate to each vertex of this {\em hypercube of decorated resolutions} a chain complex whose underlying $\Z^3$--graded vector space is defined as follows. For each $I: \mathcal{X} \rightarrow \{0,1\}$ let \[V(I) := \mbox{Span}_\C\{a_1, \ldots, a_n\}\] be the formal span of the components, $a_1, \ldots, a_n$, of $\cP^I(L)$. Then the $\C$ vector space we assign to the vertex $I$ is \[F(I) := \Wedge^*(V(I)).\] It will be convenient to note that $F(I)$ has a distinguished basis indexed by subsets $S \subseteq \{1, \ldots, n\}$. Given such a subset $S = \{i_1, \ldots, i_\ell\} \subseteq \{1, \ldots, n\}$ whose elements have been arranged in order ($i_1 < \ldots< i_\ell$), we will denote the associated basis element of $F(I)$ by \[a_S := a_{i_1} \wedge \ldots \wedge a_{i_\ell}.\] 

Now each vector space $F(I)$ is endowed with a homological ($i$) and quantum ($j$) grading exactly as in \cite{K, ORS}, and these gradings do not depend on the data of $\XX$. The odd Khovanov differential, \[\partial: \CKh(\cP(L)) \rightarrow \CKh(\cP(L)),\] which also does not depend on the data of $\XX$, is defined exactly as in \cite{K, ORS} as a signed sum (specified by an {\em edge assignment} as in \cite[Defn. 1.1]{ORS}) of elementary merge maps, $F_M: F(I) \rightarrow F(I')$ and split maps $F_\Delta: F(I) \rightarrow F(I')$ associated to edges of the hypercube. 

For completeness, the definitions of $F_M$ and $F_\Delta$ are also briefly recalled below.

Let $I_0, I_1: \mathcal{X} \rightarrow \{0,1\}$ be two vertices for which there is an oriented edge from $I_0$ to $I_1$, as in \cite[p.3]{ORS}.\footnote{$I_1$ is sometimes called an {\em immediate successor} of $I_0$.} If two components, $a_1$ and $a_2$, of $\cP^{I_0}(L)$ merge to a single component, $a$, of $\cP^{I_1}(L)$, there is a natural identification $V(I_1) \cong V(I_0)/(a_1 - a_2)$ coming from identifying $a$ with $[a_1] = [a_2]$. The merge map
\[F_M:  \Wedge^*V(I_0) \rightarrow \Wedge^*V(I_1)\] is the map on the exterior algebra induced by the projection followed by this natural identification: $V(I_0) \rightarrow V(I_0)/(a_1-a_2) \cong V(I_1)$.

If a single component, $a$, of $\cP^{I_0}(L)$ splits into two components, $a_1$ and $a_2$, of $\cP^{I_1}(L)$ and the local arrow decorating the split region points from $a_1$ to $a_2$, then the split map is defined by
\[F_\Delta: \Wedge^*(V(I_0)) \longrightarrow \Wedge^*\left(\frac{V(I_1)}{(a_1-a_2)}\right) \longrightarrow (a_1 - a_2) \wedge \Wedge^*V(S_1) \longrightarrow \Wedge^*V(S_1),\]
where the first map is the inverse of the natural identification described in the definition of the merge map, the final map is the inclusion, and the middle map is an explicit identification of the exterior algebra of the quotient, $V(I_1)/(a_1-a_2)$, as \[(a_1-a_2) \wedge \Wedge^*V(S_1) \subset \Wedge^*V(I_1).\]
 
To obtain the annular ($k$) grading, begin by choosing an oriented arc $\gamma$ from $\XX$ to $\OO$ that misses all crossings of $\cP(L)$. It will be clear from the construction that the $k$ grading is independent of this choice.

Exactly as in the even case (cf. \cite[Sec. 4.2]{JacoFest}), we have a one-to-one correspondence between distinguished basis elements $a_S$ of $F(I) \subseteq \CKh(\cP(L))$ and {\em orientations} of the Kauffman state $\cP^I(L)$, defined as follows. Choose the {\em clockwise} (CW) orientation on a component $a_i$ of the Kauffman state if $i \in S$ and the {\em counterclockwise} (CCW) orientation on $a_i$ if $i \not\in S$. The ``$k$" grading of a distinguished basis element is now defined to be the algebraic intersection number of the corresponding oriented Kauffman state with a fixed oriented arc $\gamma$ from $\XX$ to $\OO$ that misses all crossings of $\cP(L)$. See Figure \ref{fig:grading}.

\begin{figure}
\includegraphics[height=1.5in]{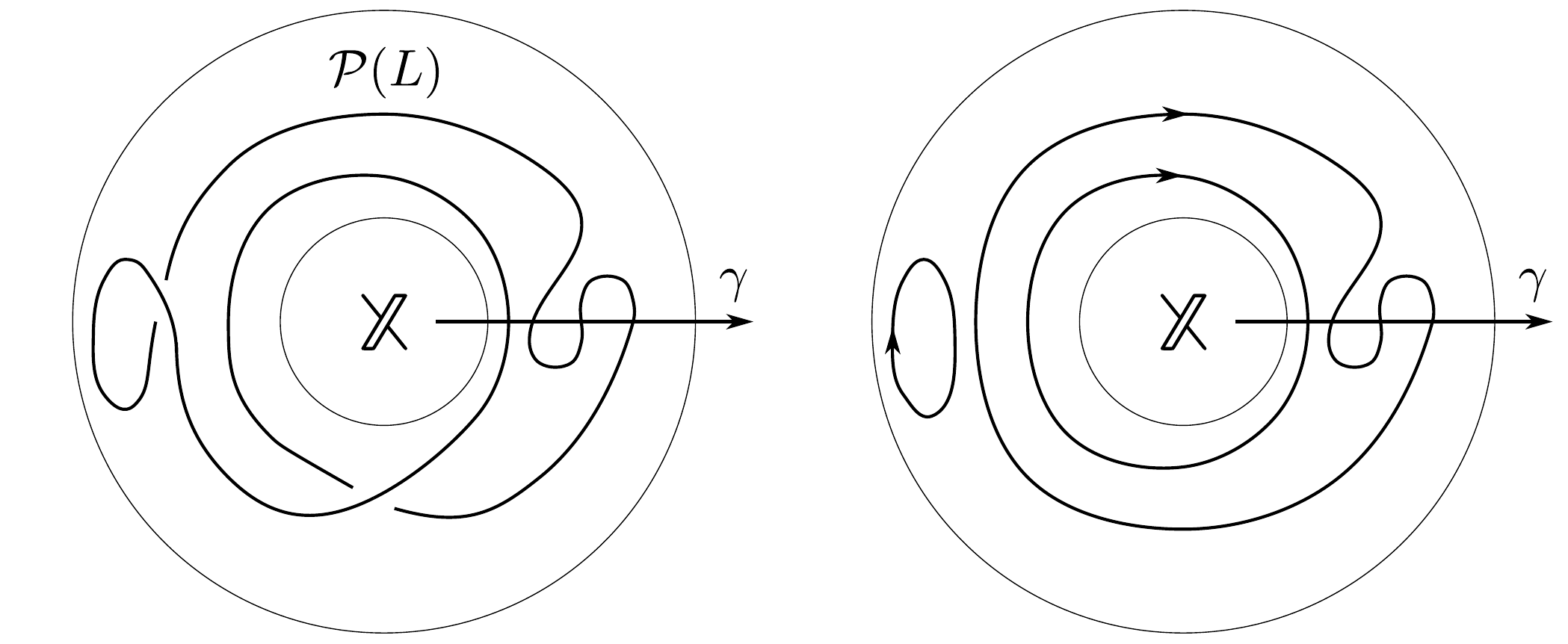}
\caption{Annular link diagram $\cP(L)$ and an oriented Kauffman state of $\cP(L)$ of ``$k$'' degree $-2$. For simplicity, the arrows at the crossings are not shown in the picture.}
\label{fig:grading}
\end{figure}

\begin{lemma} \label{lem:bicomplex}The odd Khovanov differential $\partial: \CKh(\cP(L)) \rightarrow \CKh(\cP(L))$ is non-increasing in the $k$ grading. Indeed, it can be decomposed as $\partial = \partial_0 + \partial_-$, where $\partial_0$ has $k$-degree $0$, and $\partial_{-}$ has $k$-degree $-2$.
\end{lemma}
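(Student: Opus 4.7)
My plan is to reduce the statement to a basis-element computation and then do a case analysis by the topological type of the saddle associated to each edge of the resolution hypercube. Because $\partial$ is by construction a signed sum of elementary merge and split maps $F_M$ and $F_\Delta$, and edge-assignment signs do not affect $k$-gradings, it suffices to check that for each $F_M$ or $F_\Delta$ and each distinguished basis element $a_S$, every non-zero basis summand of the image has $k$-grading equal either to $k(a_S)$ or to $k(a_S)-2$.

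The starting observation is that only non-trivial (i.e., essential in $A$) components of a Kauffman state contribute to $k$: a trivial circle bounds a disk in $A$ containing neither $\XX$ nor $\OO$, so the two endpoints of $\gamma$ lie in the same component of its complement in $S^2$, forcing the algebraic intersection with $\gamma$ to vanish. In addition, any two non-trivial components of a single resolution are pairwise parallel in $A$, so when oriented CCW they each contribute the same $\epsilon\in\{\pm 1\}$ to $k$, and when oriented CW each contributes $-\epsilon$; with the paper's choice of $\gamma$ from $\XX$ to $\OO$ and the standard planar orientation convention one has $\epsilon=+1$. Consequently the change in $k$ caused by any saddle depends only on how the associated map reassigns CW/CCW orientations (via the rule: $a_i$ appears in the wedge iff $a_i$ is CW) to the non-trivial components that take part.

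I would then stratify the edges of the hypercube into three classes according to the topology of the saddle: \textbf{(a)} only trivial circles take part; \textbf{(b)} one trivial and one non-trivial circle merge to (or split from) a non-trivial circle; \textbf{(c)} two non-trivial circles merge to (or split from) a trivial circle. Case (a) is immediate. For (b) and (c), I would iterate through the four possibilities for $\{1,2\}\cap S$ in the merge case and the two possibilities for whether $a\in S$ in the split case, apply the formulas $F_M(a_1)=F_M(a_2)=a$ and $F_\Delta(a)=a_1-a_2$, expand using anticommutativity of the exterior algebra, and read off the CW/CCW assignments on the image. In every non-zero subcase the resulting change in $k$ is $0$ or $-2$.

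The delicate case is (c), where the sign bookkeeping has real content. \emph{A priori} the (c)-merge subcase $\{1,2\}\subseteq S$ (both non-trivial circles CW) would produce a $+2$ change, but this subcase is annihilated because $a_1\wedge a_2\mapsto a\wedge a=0$. Similarly, in the (c)-split with $a\notin S$, the expansion of $(a_1-a_2)\wedge(\cdots)$ yields two summands, each of which assigns exactly one CW and one CCW orientation to $\{a_1,a_2\}$, so their net $k$-change is $0$ rather than $\pm 2$. The remaining non-zero (c)-subcases---namely $\{1,2\}\cap S=\emptyset$ for the merge and $a\in S$ for the split---together with the $-2$-summands that appear in case (b), collect into the piece $\partial_-$, while every other non-zero contribution collects into $\partial_0$. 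Summing over the hypercube then yields the decomposition $\partial=\partial_0+\partial_-$ of $k$-degrees $0$ and $-2$, respectively, and in particular $\partial$ is non-increasing in $k$.
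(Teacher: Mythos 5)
Your proof is correct, and the case analysis is both complete and careful: in particular you correctly identify the two non-obvious cancellations in case (c) (the merge subcase $\{1,2\}\subseteq S$, which would give a $+2$ shift but vanishes because $a\wedge a=0$, and the split subcase $a\notin S$, where $(a_1-a_2)\wedge(\cdots)$ expands into two summands each assigning opposite orientations to $a_1,a_2$). However, your approach is genuinely different from the paper's. The paper proves this lemma by a one-line reduction: Roberts already established the decomposition for the \emph{even} Khovanov differential (\cite[Lem.~1]{LRoberts}), and since the odd merge and split maps agree with the even ones modulo $2$ (they send the same basis elements to the same basis elements up to sign, and the $k$-grading is defined on basis elements), the matrix support of $\partial$ is identical in the two theories and the $k$-homogeneous decomposition carries over verbatim. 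Your proof, by contrast, is a self-contained basis-by-basis verification stratified by the topological type of the saddle. What the paper's route buys is brevity and a clean separation of concerns (the grading statement is a fact about the even theory, and signs are irrelevant to it); what your route buys is transparency about exactly which merge/split subcases contribute to $\partial_-$ versus $\partial_0$, which is in fact the information the paper needs later (Subsection~\ref{subs:alternative}) when writing out $m_0$ and $\Delta_0$ explicitly, so your computation is not wasted effort.

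Two minor points of precision worth tightening. First, your shorthand ``$F_\Delta(a)=a_1-a_2$'' conflates the circle-level identification with the full map; what you mean (and correctly use in the next paragraph) is $F_\Delta(1)=a_1-a_2$ and $F_\Delta(a)=a_1\wedge a_2$. Second, the assertion that CCW-oriented essential circles contribute $+1$ to $k$ (i.e., $\epsilon=+1$) is convention-dependent and is never actually used: all that matters for the argument is that the contributions of parallel essential circles are equal in sign when equally oriented, so you could drop the sentence fixing $\epsilon$ and the proof would be unchanged.
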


\begin{proof}
Roberts proves (\cite[Lem. 1]{LRoberts}) that the {\em even} Khovanov differential is non-increasing in this extra grading and decomposes according to $k$--degree as described in the statement of the lemma. As noted in \cite{OS_Branched} and \cite{ORS}, the odd merge and split maps $F_M$ and $F_\Delta$ agree modulo $2$ with the merge and split maps in even Khovanov homology, so the odd Khovanov differential, $\partial$, has precisely the same decomposition according to $k$--degree. 
\end{proof}

Decomposing $\partial^2 = 0$ into its $k$--homogeneous pieces, we see that $\partial_0, \partial_-$ are two anticommuting differentials on $\CKh(L)$. The homology with respect to $\partial_0$ is triply-graded. We will denote it by: 
\[\SKh(L) := \bigoplus_{(i,j,k) \in \Z^3} \SKh^{i}(L;j,k)\] and refer to it as the {\em odd annular Khovanov homology} of $L$.

In the next subsection, we will define a $\gloneone$ action on $\SKh(L)$ and show that this action is invariant under annular Reidemeister moves, hence yields an invariant of the isotopy class of $L \subseteq A \times I$. We will also discuss the interaction of the $\gloneone$ action with the $i$, $j$, and $k$ gradings on the complex and conclude that, when regarded simply as a triply-graded vector space without a Lie superalgebra action, $\SKh(L)$ is an invariant of the isotopy class of $L \subseteq A \times I$.

\subsection{Definition and invariance of the $\mathbf{\mathfrak{gl}(1|1)}$ action on $\SKh(L)$}\label{subs:definition}
As before, we will denote by $V(I)$ the formal span of the components $a_1,\ldots,a_n$ of $\cP^I(L)$, and by $F(I)=\Wedge^*(V(I))$ the exterior algebra of $V(I)$.

We will further use the following notations: $|L|$ will denote the number of link components, $n_+$ (resp., $n_-$) will denote the number of positive (resp., negative) crossings in the link projection, and $|I|$ will denote the number of crossings $c\in\mathcal{X}$ such that $I(c)=1$. Moreover, $n_t$ (resp., $n_e$) will denote the number of trivial (resp., essential) components of $\cP^I(L)$, where a component $a_i$ is called trivial (resp., essential) if it is zero (resp., nonzero) in the first homology of $S^2-\{\XX,\OO\}$.

Using these notations, we can define the tri-degree of an element $a_{i_1}\wedge\ldots\wedge a_{i_\ell}\in F(I)$ by $i=|I|-n_-$, $j=n-2\ell+|I|+n_+-2n_-$, and $k=n_e-2\ell_e$, where $n=n_t+n_e$ denotes the number of components of $\cP^I(L)$ and $\ell_e$ denotes the number of indices $r\in\{1,\ldots,\ell\}$ for which the component $a_{i_r}$ is essential.

We will now regard $F(I)$ as a vector superspace with the supergrading given by the modulo $2$ reduction of $(j-|L|)/2$, where $j$ denotes the quantum degree just defined. It is known that $j$ always has the same parity as $|L|$, and hence the modulo $2$ reduction of $(j-|L|)/2$ is well-defined.

In what follows, we will assume that the components $a_1,\ldots,a_n$ are ordered so that $a_1,\ldots,a_{n_t}$ are trivial and $a_{n_t+1},\ldots,a_n$ are essential, and that the essential components are ordered according to their proximity to the basepoint $\XX$, so that $a_{n_t+1}$ is the essential component which is closest to $\XX$.

We can then write $F(I)$ as a tensor product of two vector superspaces
\[
F(I)=\Wedge^*(\mbox{Span}_{\C}\{a_1,\ldots,a_{n_t}\})\otimes\Wedge^*(\mbox{Span}_{\C}\{a_{n_t+1},\ldots,a_n\}),
\]
where we define the supergrading on the two tensor factors as follows. If $\ell$ denotes the natural $\Z_{\geq 0}$ degree on the exterior algebra, then the superdegree on the first tensor factor is given by the modulo $2$ reduction of $\ell+(n+|I|+n_+-2n_--|L|)/2$, and the superdegree on the second tensor factor as the modulo $2$ reduction of $\ell$.

Now equip $\mbox{Span}_{\C}\{a_{n_t+1},\ldots,a_n\}$ with the unique symmetric bilinear form $\langle-,-\rangle$ for which the vectors $a_{n_t+1},\ldots,a_n$ are orthonormal, and let $a_I,b_I\in \mbox{Span}_{\C}\{a_{n_t+1},\ldots,a_n\}$ be the vectors
\[
a_I:=a_{n_t+1}+a_{n_t+2}+\ldots+a_n\quad\mbox{and}\quad
b_I:=a_{n_t+1}-a_{n_t+2}+\ldots+(-1)^{n_e-1}a_n.
\]
Observe that $\langle a_I,b_I\rangle=m$, where
\[
m =\begin{cases}
0&\mbox{if $n_e$ is even,}\\
1&\mbox{if $n_e$ is odd.}
\end{cases}
\]
Since $n_e$ has the same parity as the winding number of $L$ around $\XX$, $m$ only depends on the parity of this winding number and not on the particular choice of $I$. Define a linear map $\rho_{U^\prime}\colon\gl(1|1)\rightarrow\gl(U^\prime)$ for $U^\prime:=\Wedge^*(\mbox{Span}_{\C}\{a_{n_t+1},\ldots,a_n\})$ by
\[
\begin{array}{rcl}
\rho_{U^\prime}(h_+)(a_{i_1}\wedge\ldots\wedge a_{i_\ell})&=&m a_{i_1}\wedge\ldots\wedge a_{i_\ell},\\
\rho_{U^\prime}(h_-)(a_{i_1}\wedge\ldots\wedge a_{i_\ell})&=&(n_e-2\ell)a_{i_1}\wedge\ldots\wedge a_{i_\ell},\\
\rho_{U^\prime}(e)(a_{i_1}\wedge\ldots\wedge a_{i_\ell})&=&(a_{i_1}\wedge\ldots\wedge a_{i_\ell})\intprodrev a_I,\\
\rho_{U^\prime}(f)(a_{i_1}\wedge\ldots\wedge a_{i_\ell})&=&a_{i_1}\wedge\ldots\wedge a_{i_\ell}\wedge b_I\\
\end{array}
\]
for all $n_t+1\leq i_1<\ldots<i_\ell\leq n$.

Comparing with Subsection~\ref{subs:exterior_action}, we see that $\rho_{U^\prime}$ endows the vector superspace $U^\prime$ with a well-defined action of $\gl(1|1)$. We can extend this action to an action on $F(I)$ by regarding $\Wedge^*(\mbox{Span}_{\C}\{a_1,\ldots,n_t\})$ as a trivial $\gl(1|1)$ representation. In other words, we can define a $\gl(1|1)$ action on $F(I)$ by setting
\[
\rho_{F(I)}(x):=\id\otimes\rho_{U^\prime}(x)
\]
for all $x\in\gl(1|1)$, where $\id$ denotes the identity map of $\Wedge^*(\mbox{Span}_{\C}\{a_1,\ldots,n_t\})$, and $\otimes$ denotes the ordinary (ungraded) tensor product of linear maps.

\begin{remark}\label{remark:gradings} The reader should note that $h_-$ acts on a vector $a_{i_1}\wedge\ldots\wedge a_{i_\ell}\in F(I)$ by scalar multiplication by $k=n_e-2\ell_e$, where $\ell_e$ denotes the number of essential components in the wedge product $a_{i_1}\wedge\ldots\wedge a_{i_\ell}$. Therefore, the $k$-grading on $F(I)$ can be viewed as the ``weight space grading'' with respect to the action of $h_-$. It is further clear that the $\gl(1|1)$ action preserves the $i$-grading because the $i$-grading is constant on $F(I)$. Moreover, the $\gl(1|1)$ action preserves the $(j-k)$-grading because the $\gl(1|1)$ action preserves the number $\ell_t$ of trivial components in a wedge product $a_{i_1}\wedge\ldots\wedge a_{i_\ell}\in F(I)$, and $j-k=n-2\ell+|I|+n_+-2n_--(n_e-2\ell_e)=n_t-2\ell_t+|I|+n_+-2n_-$.
\end{remark}

\begin{lemma}\label{lemma:intertwine} Let $I_0,I_1\colon\mathcal{X}\rightarrow\{0,1\}$ be two vertices of the resolution hypercube for which there is an oriented edge from $I_0$ to $I_1$ and let $F_W\colon F(I)\rightarrow F(I^\prime)$ for $W=M$ or $W=\Delta$ be the associated merge or split map. Then the $k$-degree preserving part of $F_W$ intertwines the actions of $\gl(1|1)$ on $F(I)$ and $F(I^\prime)$.
\end{lemma}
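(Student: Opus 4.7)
The plan is to classify the local moves that can occur along an oriented edge of the resolution hypercube and then verify the intertwining case by case. Since each component of a complete resolution is a simple closed curve in the annulus $A$, its class in $H_1(A)\cong\Z$ is either $0$ (trivial, $T$) or $\pm 1$ (essential, $E$). The only possible merges are therefore $T+T\to T$, $T+E\to E$, and $E+E\to T$, with dual splits $T\to T+T$, $E\to T+E$, and $T\to E+E$; a hypothetical $E+E\to E$ merge would produce a simple closed curve of winding $\pm 2$ in $A$, which is impossible. Moreover, in the $E+E\to T$ case the two essentials that merge must be \emph{adjacent} in the proximity-to-$\XX$ ordering, since a saddle connecting two non-adjacent nested essentials would force the resulting component not to be a simple closed curve; similarly for $T\to E+E$. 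In every case the parity of $n_e$ is preserved, so the scalar $m\in\{0,1\}$ controlling the action of $h_+$ is constant along the edge.

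My next step is to reduce to an intertwining statement on the essential tensor factor. Since $\rho_{F(I)}=\id\otimes\rho_{U^\prime}$, I only need to track how the $k$-preserving part of $F_W$ acts on the essential factor (together with the signs that appear when a trivial component migrates between the trivial and essential blocks). In the four cases $T+T\to T$, $T+E\to E$, $T\to T+T$, and $E\to T+E$, the collection of essential components is either unchanged or has exactly one component relabeled (the essential which is consumed by, or produced from, a trivial), and after this relabeling the induced map on essentials is the identity. Intertwining with every generator of $\gl(1|1)$ is then immediate. A small bookkeeping step confirms that the $k$-preserving part really has this simple form; for instance in $T+E\to E$, basis elements whose trivial factor contains the disappearing trivial component shift in $k$-degree by $-2$ and are therefore discarded.

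The main cases are $E+E\to T$ and $T\to E+E$. For the merge, let $a_p=a_{n_t+r}$ and $a_q=a_{n_t+r+1}$ be the two adjacent essentials combining into a trivial component $a$ in $I_1$. The adjacency is essential here: it implies that deleting $r,r+1$ from the ordering preserves the alternating-sign pattern of $b_I$ on the remaining essentials, giving the clean decompositions
\[
a_{I_0}=a_{I_1}+a_p+a_q,\qquad b_{I_0}=b_{I_1}+(-1)^{r-1}(a_p-a_q),
\]
where $a_{I_1},b_{I_1}$ are viewed inside $V(I_0)$ via the natural inclusion of the surviving essentials. The $k$-preserving part of $F_M$ kills basis elements containing neither or both of $\{a_p,a_q\}$ and sends a basis element containing exactly one of them to its natural image in $F(I_1)$, with a sign coming from moving $a$ past the preceding essentials into the trivial block. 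To check intertwining with $f=\cdot\wedge b_{I_0}$, substitute the decomposition of $b_{I_0}$: the $b_{I_1}$ piece commutes past $F_M^{k=0}$ to produce the action of $b_{I_1}$ on $F(I_1)$, while the $(-1)^{r-1}(a_p-a_q)$ correction, wedged against a basis element containing one of $\{a_p,a_q\}$, yields a multiple of $a_p\wedge a_q$ which $F_M$ kills via $a\wedge a=0$. The parallel computation for $e=\cdot\intprodrev a_{I_0}$ uses the decomposition of $a_{I_0}$ and the identities $(a_p\wedge\omega)\intprodrev a_p=\pm\omega$, $(a_p\wedge\omega)\intprodrev a_q=0$ (and analogues with $a_p,a_q$ swapped) to produce precisely the needed $\omega\intprodrev a_{I_1}$. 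Compatibility with $h_-$ is immediate because $n_e-2\ell=k$ is constant on the $k$-preserving subspace, and with $h_+$ because $m$ is constant. The $T\to E+E$ case is formally dual: using the definition of $F_\Delta$ as the composition of the inverse quotient identification, wedging with $(a_1-a_2)$, and inclusion, the factor $(a_1-a_2)$ is precisely the object that matches the $(-1)^{r-1}(a_p-a_q)$ term above, and the same identities run backwards.

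The hard part is the sign bookkeeping in the $E+E\to T$ and $T\to E+E$ cases: one must use adjacency to obtain the clean decompositions of $a_{I_0}$ and $b_{I_0}$, track the sign of moving the new trivial $a$ across the essentials to reach the trivial block, and verify that the $(-1)^{r-1}(a_p-a_q)$ correction is either annihilated by $F_M$ (for the merge) or supplied by the $(a_1-a_2)$ factor in $F_\Delta$ (for the split). Once this setup is in place, the intertwining for $e$ and $f$ reduces to short exterior-algebra identities and $h_+,h_-$ are automatic.
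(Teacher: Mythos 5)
Your proposal is correct, but it proves the lemma by a genuinely different route than the paper. The paper defers the argument to Subsection~\ref{subs:alternative}: it identifies $F(I)$ with a tensor power $\tilde{F}(I)=V^{\otimes n}\langle\cdot\rangle$ via $\alpha_I$ (Lemma~\ref{lemma:intertwineactions}), shows that $F_M$ and $F_\Delta$ become $\id^{\otimes(i-1)}\otimes m\otimes\id^{\otimes(n-i-1)}$ and $\id^{\otimes(i-1)}\otimes\Delta\otimes\id^{\otimes(n-i)}$ (Lemma~\ref{lemma:intertwineboundary}), decomposes $m=m_0+m_-$ and $\Delta=\Delta_0+\Delta_-$ by $k$-degree, and then checks that $m_0$ and $\Delta_0$ intertwine the $\gl(1|1)$ action on the two relevant tensor factors. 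In the crucial essential--essential case, this reduces to recognizing $m_0$ and $\Delta_0$ as the maps $\tilde{p}$ and $\tilde{i}$ already computed in Subsection~\ref{subs:productaction}, so the whole sign burden is carried by Lemma~\ref{lemma:tensor} and the explicit analysis of $V^*\langle 1\rangle\otimes V$ and $V\otimes(V^*\langle 1\rangle)$. You instead stay entirely inside the exterior algebra picture: you classify the saddle types topologically (parity of $n_e$ rules out $E+E\to E$), observe that merged or split essentials must be adjacent in the proximity-to-$\XX$ order, and exploit the resulting clean decompositions $a_{I_0}=a_{I_1}+a_p+a_q$ and $b_{I_0}=b_{I_1}+(-1)^{r-1}(a_p-a_q)$ to reduce the check for $e$ and $f$ to short identities such as $F_M\bigl(\omega\wedge(a_p-a_q)\bigr)=0$. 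Both arguments are sound; each has a clear advantage. The paper's detour through $\tilde{F}(I)$ pays off later, since the same machinery is reused to prove the stronger homotopy-equivalence statement of Theorem~\ref{thm:homequiv}, and it converts the essential--essential case into a pure $\gl(1|1)$-representation computation with the Koszul signs handled once and for all in Lemma~\ref{lemma:tensor}. Your version is more self-contained, and it makes the topological origin of the alternating signs in $b_I$ transparent via the adjacency observation --- information the paper encodes implicitly by choosing $V$ and $V^*\langle 1\rangle$ on alternating tensor factors. The one place you should be more explicit: when $\omega$ contains \emph{neither} of $a_p,a_q$, the correction term $\omega\wedge(a_p-a_q)$ \emph{does} contribute to the $k$-preserving part of $F_M$ term by term, and the cancellation comes from $a_p-a_q\mapsto 0$ in the quotient rather than from $a\wedge a=0$; your write-up only names the second mechanism. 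This is a presentational gap, not a mathematical one, and the analogous point in the split case (where $(a_1-a_2)\wedge(a_1-a_2)=0$ does the work) you already have right.
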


Although it is possible to prove Lemma~\ref{lemma:intertwine} directly, we will defer the proof to the next subsection, where we will give an alternative description of the $\gl(1|1)$ action on $F(I)$

Lemma~\ref{lemma:intertwine} tells us that the boundary maps in the odd annular Khovanov complex intertwine the $\gl(1|1)$ action, and this in turn implies that there is an induced $\gl(1|1)$ action on the odd annular Khovanov homology of an annular link diagram. By the remark preceding the lemma, it is further clear that this action preserves two gradings, namely the $i$-grading and the $(j-k)$-grading. In this sense, odd annular Khovanov homology becomes a bigraded $\gl(1|1)$ representation.

\begin{theorem}\label{theorem:invariance} If two annular link diagrams $\cP(L)$ and $\cP(L^\prime)$ differ by an annular Reidemeister move, then  $\SKh(L)$ and $\SKh(L^\prime)$ are isomorphic as bigraded $\gl(1|1)$ representations.
\end{theorem}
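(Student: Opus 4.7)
The plan is to establish invariance by checking the three annular Reidemeister moves R1, R2, R3 individually. For each move I start from the explicit chain homotopy equivalence $\Psi\colon \CKh(\cP(L)) \to \CKh(\cP(L^\prime))$ constructed by Ozsv{\'a}th-Rasmussen-Szab{\'o} in \cite{ORS}, which establishes Reidemeister invariance for the (non-annular) odd Khovanov complex. The goal is then to show that $\Psi$ is compatible with both the extra $k$-grading and the $\gl(1|1)$ action defined in Subsection~\ref{subs:definition}, and that its $k$-preserving part descends to the claimed bigraded isomorphism.

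The first step is to decompose $\Psi$ according to $k$-degree. In the even setting, Roberts' argument \cite[Lem.~1]{LRoberts} shows that the Reidemeister chain maps are non-increasing in $k$ and split as $\Psi = \Psi_0 + \Psi_-$, with $\Psi_0$ of $k$-degree $0$ and $\Psi_-$ of $k$-degree $-2$ (together with analogous decompositions of the homotopies). Since the odd merge and split maps agree modulo $2$ with their even counterparts, and since the Reidemeister chain maps of \cite{ORS} are built from merges, splits, identity maps, and signs specified by edge assignments, the very same $k$-degree decomposition transfers verbatim to the odd setting. Extracting the $k$-degree-preserving part then gives a chain homotopy equivalence $\Psi_0$ with respect to $\partial_0$, so already at this stage $\SKh(L)\cong\SKh(L^\prime)$ as triply graded vector spaces.

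The heart of the argument is showing that $\Psi_0$ intertwines the $\gl(1|1)$ action. The action depends only on the essential components of each complete resolution, on their ordering relative to $\XX$, and on the vectors $a_I$ and $b_I$ built from them. A Reidemeister move is supported in a small disk $D$ in the annulus, and outside $D$ the chain map $\Psi$ acts by the identity. Inside $D$ all strands are trivial in $H_1(S^2-\{\XX,\OO\})$, so an essential component of a resolution of $\cP(L)$ corresponds to an essential component of the matched resolution of $\cP(L^\prime)$, with the same cyclic position relative to $\XX$; in particular, the vectors $a_I$ and $b_I$ match under $\Psi_0$. Each local building block is then either the identity, a merge, or a split between matched trivial circles, so by Lemma~\ref{lemma:intertwine} and the fact that intertwining is closed under composition, linear combinations, and tensoring with the identity on the complement of $D$, the map $\Psi_0$ commutes with the action of each of $h_+$, $h_-$, $e$, and $f$.

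The main obstacle will be the R3 move, whose chain map in \cite{ORS} is a nontrivial combination of elementary saddles across the resolution cube, with carefully chosen edge-assignment signs. I expect to handle this either by factoring R3 through a pair of R2 moves via a planar isotopy, reducing to cases already addressed, or by verifying intertwining summand by summand, since each summand is again a composition of merges, splits, and identities away from the Reidemeister disk. Preservation of the bigrading $(i, j-k)$ under $\Psi_0$ is then automatic: $\Psi$ preserves $(i,j)$ as a map of bigraded odd Khovanov complexes, and $\Psi_0$ preserves $k$ by construction, as noted in Remark~\ref{remark:gradings}.
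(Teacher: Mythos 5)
Your strategy differs from the paper's in a meaningful way: you propose to start from the full non-annular odd Khovanov chain homotopy equivalence $\Psi$ of \cite{ORS}, decompose it into $k$-homogeneous pieces $\Psi = \Psi_0 + \Psi_-$ using Roberts-style filtration arguments, and then verify that $\Psi_0$ intertwines the $\gl(1|1)$ action block by block. The paper instead works internally in the annular category from the outset: for RI and RII it identifies $\ACKh(\mathcal{D}^\prime)$ with a mapping cone (resp.\ a four-term square complex), isolates an acyclic subcomplex or subquotient preserved by $\gl(1|1)$, and reads off the quasi-isomorphism from that structure; for RIII it observes that the ORS maps $\Psi$ and $\Phi$ in the relevant short exact sequence are given by scalar multiplication on each vertex space of the (partial) resolution cubes, so they tautologically commute with the $\gl(1|1)$ action, which acts by linear endomorphisms on each vertex space. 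That scalar-multiplication observation is the slick ingredient you are missing; it dispenses with the summand-by-summand check entirely. Your filtered-equivalence route is legitimate and arguably more systematic, but the paper's route buys a shorter RIII argument and avoids having to verify that the ORS homotopies (not just the chain maps) decompose compatibly.

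Two points in your write-up need repair. First, your plan to ``factor R3 through a pair of R2 moves via a planar isotopy'' is not available in general; RIII is an independent move, and what one can do is use the RII equivalence to build the RIII one (Bar-Natan's cone-detour style), which is not the same as a literal factorization. Your fallback (summand-by-summand intertwining, each summand a composite of $m_0$, $\Delta_0$, units, counits, and identities) does work, so this is a recoverable misstep, but it should be the primary plan, not the backup. Second, the assertion that each local building block is a merge or split ``between matched trivial circles'' is inaccurate: in an RI move on an essentially wound strand, for instance, the split produces a trivial circle \emph{and} an essential circle, and likewise RII and RIII can merge or split circles of mixed type. This does not break your argument, since Lemma~\ref{lemma:intertwine} covers all combinations of trivial and essential components, but as phrased the justification suggests only the trivial-trivial case is relevant, which is misleading.
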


\begin{proof} Our proof of this theorem will closely follow the original proof of invariance of odd Khovanov homology given in \cite{ORS}. In each step of the proof, we will verify that the relevant complexes and chain maps defined in \cite{ORS} are compatible with the $\gl(1|1)$ actions on the $F(I)$ when reinterpreted in the annular setting.

{\em Invariance under annular Reidemeister moves of type I.}
Consider an annular link diagram $\mathcal{D}^\prime$ which is obtained from an annular link diagram $\mathcal{D}$ by performing a left-twist Reidemeister I move, so that $\mathcal{D}^\prime$ contains a single positive crossing which is not already present in $\mathcal{D}$. Let $\mathcal{D}_0$ and $\mathcal{D}_1$ denote the two diagrams obtained from $\mathcal{D}^\prime$ by resolving this crossing in the two possible ways. See Figure~\ref{fig:RI}. Then $\mathcal{D}_1$ is isotopic to $\mathcal{D}$, and $\mathcal{D}_0$ is isotopic to a union of $\mathcal{D}$ with a small unknotted circle.

\begin{figure}
\includegraphics[height=0.9in]{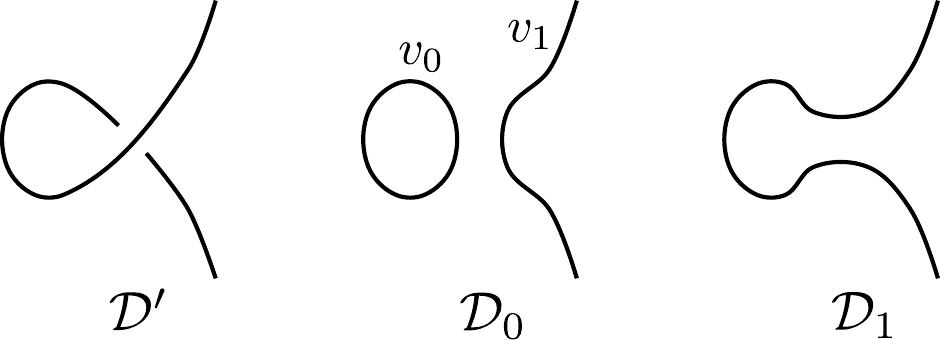}
\caption{Reidemeister I. The diagram $\mathcal{D}^\prime$ and the resolutions $\mathcal{D}_0$ and $\mathcal{D}_1$.}
\label{fig:RI}
\end{figure}

Arguing as in the proof of \cite[Prop.3.1]{ORS}, we can identify the odd annular Khovanov complex of $\mathcal{D}^\prime$ with a mapping cone of a chain map
\[
D\colon\ACKh(\mathcal{D}_0)\longrightarrow\ACKh(\mathcal{D}_1),
\]
where $\ACKh(\mathcal{D}_i)$ denotes the odd annular Khovanov complex of $\mathcal{D}_i$. The map $D$ is surjective, and hence its mapping cone is quasi-isomorphic to $\ker(D)$. Moreover, since $D$ is given by boundary maps, it intertwines the action of $\gl(1|1)$, and hence $\ker(D)$ is itself a $\gl(1|1)$ representation.

Let $v_0$ denote the small circular component of $\mathcal{D}_0$ and let $v_1$ denote the component which connects to $v_0$. Note that while $v_0$ is always trivial, $v_1$ can either be trivial or essential. In the former case, $\ker(D)$ is equal to the complex $(v_1-v_0)\wedge\ACKh(\mathcal{D}_0)$, and using that $\gl(1|1)$ acts trivially on $v_0$ and $v_1$, one can see that this complex is isomorphic (in the category of complexes of $\gl(1|1)$ representations) to the complex $\ACKh(\mathcal{D})$. Similarly, if $v_1$ is essential, then $\ker(D)$ is equal to $v_0\wedge\ACKh(\mathcal{D}_0)$, and using that $v_0$ is trivial, one can again see this complex is isomorphic to $\ACKh(\mathcal{D})$. In either case, we therefore obtain that $\ACKh(\mathcal{D}^\prime)$ is quasi-isomorphic to $\ACKh(\mathcal{D})$, proving invariance under annuular Reidemeister moves of type I.

{\em Invariance under annular Reidemeister moves of type II.}
Next, assume $\mathcal{D}^\prime$ is obtained from $\mathcal{D}$ by performing an annular Reidemeister move of type II. For $i,j\in\{0,1\}$, let $\mathcal{D}_{ij}$ denote the diagram obtained from $\mathcal{D}^\prime$ by choosing the $i$- and the $j$-resolution at the two crossings of $\mathcal{D}^\prime$ which are not present in $\mathcal{D}$. Assume the numbering of the crossings is such that $\mathcal{D}_{01}$ is isotopic to $\mathcal{D}$, and $\mathcal{D}_{10}$ is obatined from $\mathcal{D}_{00}$ by adding a small unknotted circle. See Figure~\ref{fig:RII}. Following the proof of \cite[Prop. 3.2]{ORS}, we can write the complex $\ACKh(\mathcal{D}^\prime)$ in the following form:
\[
\begin{tikzcd}
\ACKh(\mathcal{D}_{01})\arrow{r}&\ACKh(\mathcal{D}_{11})\\
\ACKh(\mathcal{D}_{00})\arrow{r}\arrow{u}&\ACKh(\mathcal{D}_{10})\arrow{u}
\end{tikzcd}
\]
In this diagram, all arrows represent maps of $\gl(1|1)$ representations. Let $v_2$ denote the small circular component in $\mathcal{D}_{10}$ and let $X\subset\ACKh(\mathcal{D}_{10})$ be the subcomplex spanned by all elements of the form $a_{i_1}\wedge\ldots\wedge a_{i_{\ell}}$ which don't contain $v_2$ as a factor. Since $v_2$ is a trivial component, $\gl(1|1)$ acts trivially on $v_2$, and hence the $\gl(1|1)$ action preserves the subcomplex $X$. Moreover, the restriction $X\rightarrow\ACKh(\mathcal{D}_{11})$ of the right vertical arrow to $X$ is an isomorphism, and thus the above complex is quasi-isomorphic to a complex of the form
\[
\begin{tikzcd}
\ACKh(\mathcal{D}_{01})&\\
\ACKh(\mathcal{D}_{00})\arrow{r}\arrow{u}&\ACKh(\mathcal{D}_{10})/X.
\end{tikzcd}
\]
In this complex, the horiziontal arrow is an isomorphism, and so the above complex is quasi-isomorphic to $\ACKh(\mathcal{D}_{01})$, which is in turn isomorphic to $\ACKh(\mathcal{D})$.

\begin{figure}
\includegraphics[height=2in]{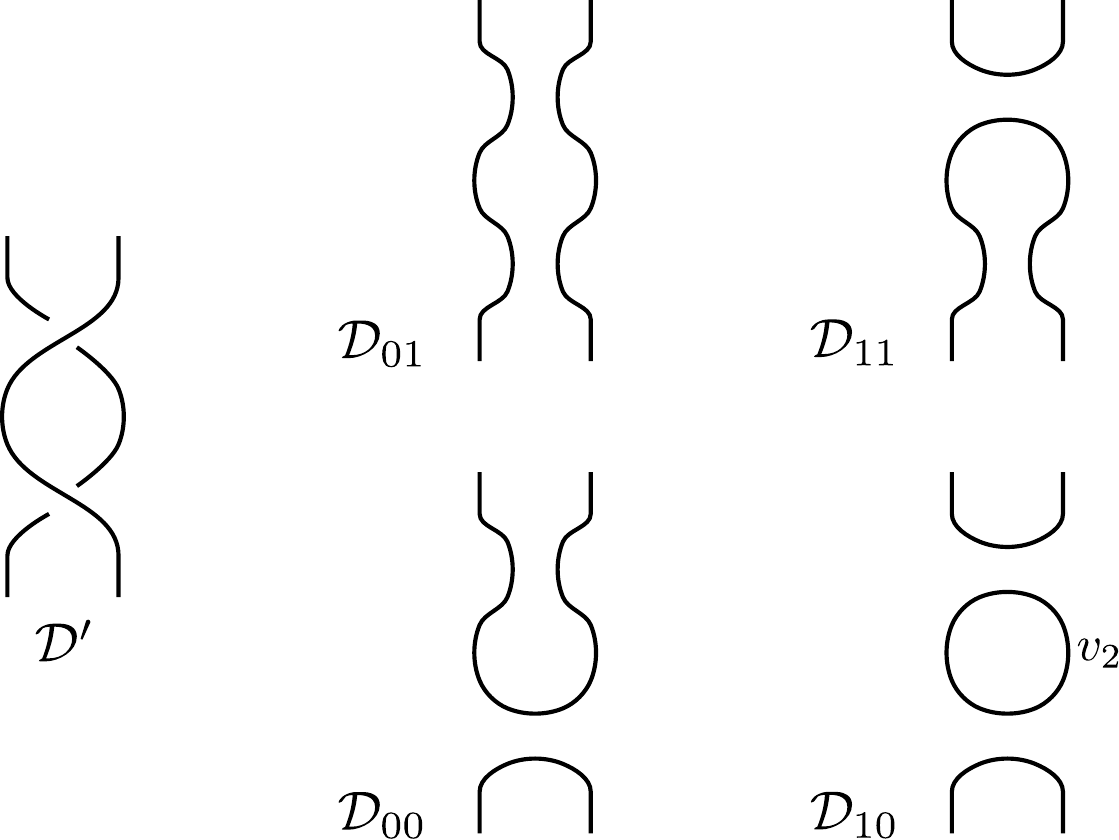}
\caption{Reidemeister II. The diagram $\mathcal{D}^\prime$ and the resolutions $\mathcal{D}_{ij}$.}
\label{fig:RII}
\end{figure}

{\em Invariance under annular Reidemeister moves of type III.} Suppose $\mathcal{D}^\prime$ is obtained from $\mathcal{D}$ by performing an annular Reidemeister III move. By repeating the arguments used in the proof of \cite[Prop. 3.3]{ORS}, one can show that $\ACKh(\mathcal{D})$ is quasi-isomorphic to a complex $L$, which fits into a short exact sequence
\[
0\longrightarrow R\stackrel{\Psi}{\longrightarrow} L\stackrel{\Phi}{\longrightarrow} P\longrightarrow 0.
\]
In \cite{ORS}, the complexes $L$ and $R$ are depicted on the left and on the right of Figure~11, and the complex $P$ is depicted in Figure~10.
Using similar arguments as above, one can see that the complexes $R$, $L$, and $P$ are complexes of $\gl(1|1)$ representations, and that $L$ is quasi-isomorphic to $\ACKh(\mathcal{D})$ in the category of $\gl(1|1)$ representations. Moreover, the maps $\Psi$ and $\Phi$ are given by scalar multiplication on the spaces associated to the vertices of (partial) resolution cubes, and since $\gl(1|1)$ acts these spaces by linear endomorphisms, it is clear that $\Psi$ and $\Phi$ intertwine the $\gl(1|1)$ actions. As in the proof of \cite[Prop. 3.3]{ORS}, the complex $R$ is acyclic, and so $L$ and hence $\ACKh(\mathcal{D})$ is quasi-isomorphic in the category of $\gl(1|1)$ representations to $P$.

By repeating the above arguments, one can show that $\ACKh(\mathcal{D}^\prime)$ is quasi-isomorphic to an analogous complex $P^\prime$, and arguing as in the proof of \cite[Prop. 3.3]{ORS}, one can see that this complex agrees with the complex $P$ up to possible signs. Following \cite{ORS}, one can then show that $P$ is isomorphic to $P^\prime$, via an isomorphism which is given by scalar multiplication on the spaces associated to vertices of (partial) resolution cubes. Since $\gl(1|1)$ acts linearly on these spaces, it follows that this isomorphism intertwines the $\gl(1|1)$ actions, and so the complexes $\ACKh(\mathcal{D})$ and $\ACKh(\mathcal{D}^\prime)$ are quasi-isomorphic in the category of $\gl(1|1)$ representations.
\end{proof}

\begin{remark} The choice of the supergrading on odd annular Khovanov homlology is not unique. In fact, by the definitions and by Remark~\ref{remark:gradings}, the boundary maps and the $\gl(1|1)$ action preserve the $(j-k)$-grading. Therefore, any shift of the supergrading by a function of $j-k$ yields a new supergrading, which is also compatible with the $\gl(1|1)$ action. For example, shifting the superdgree by the modulo $2$ reduction of $(j-k+|L|+m)/2$, where $m$ is as in the definition of the $\gl(1|1)$ action, yields a new superdegree, which is given explicitly by the modulo $2$ reduction of $(k-m)/2$.
\end{remark}

\subsection{An alternative description of the $\mathbf{\mathfrak{gl}(1|1)}$ action}\label{subs:alternative}
In this subsection, we will identify the vector superspace $F(I)$ with an isomorphic vector superspace $\tilde{F}(I)$. Using this identification, we will give a new description of the $\gl(1|1)$ action on $F(I)$ and of the boundary maps in the odd annular Khovanov complex. We will then use this description to prove a stronger version of Theorem~\ref{theorem:invariance}.

As before, we will denote by $a_1,\ldots,a_n$ the components of the resolution $\cP^I(L)$. Moreover, we will denote by $V$ a $2$-dimensional vector superspace spanned by two homogeneous elements $v_+$ and $v_-$ of superdegrees $\bar{0}$ and $\bar{1}$, respectively. Let $\tilde{F}(I)$ denote the vector superspace
\[
\tilde{F}(I):=(V\otimes\ldots\otimes V)\langle (n+|I|+n_+-2n_--|L|)/2\rangle,
\]
where there are $n$ tensor factors on the right-hand side, and where it is understood that the $i$th tensor factor corresponds to the component $a_i$. We can define an isomorphism of vector superspaces
\[
\alpha_I\colon F(I)\longrightarrow\tilde{F}(I)
\]
by sending the element $a_{i_1}\wedge\ldots\wedge a_{i_\ell}\in F(I)$ for $i_1<\ldots<i_{\ell}$ to the element $v_{\epsilon_1}\otimes\ldots\otimes v_{\epsilon_n}\in\tilde{F}(I)$ where
\[
\epsilon_i=\begin{cases}
+&\mbox{if $i\notin\{i_1,\ldots,i_{\ell}\}$,}\\
-&\mbox{if $i\in\{i_1,\ldots,i_{\ell}\}$.}
\end{cases}
\]

\begin{remark} Under this isomorphism, the $(i,j,k)$-trigrading on $F(I)$ correspond to an $(i,j,k)$-trigrading on $\tilde{F}(I)$. Explicitly, the $i$-grading is constant on $\tilde{F}(I)$ and given by $i=|I|-n_-$. The $j$-grading on $\tilde{F}(I)$ is given by \[j(v_{\epsilon_1}\otimes\ldots\otimes v_{\epsilon_n})=j(v_{\epsilon_1})+\ldots+j(v_{\epsilon_n})+|I|+n_+-2n_-,\] where the $j$-grading on the $i$th tensor factor is defined by $j(v_\pm):=\pm 1$. The $k$-grading on $\tilde{F}(I)$ is given by \[k(v_{\epsilon_1}\otimes\ldots\otimes v_{\epsilon_n})=k(v_{\epsilon_1})+\ldots+k(v_{\epsilon_n}),\] where the $k$-grading on the $i$th tensor factor is defined by $k(v_\pm):=0$ if $a_i$ is trivial, and $k(v_\pm):=\pm 1$ if $a_i$ is essential.
\end{remark}

\begin{remark} It should be noted that the map $\alpha_I$ depends nontrivially on the ordering of the components $a_1,\ldots,a_n$ of the resolution $\cP^I(L)$. However, different orderings lead to coherent maps $\alpha_I$, in the following sense. If two orderings differ by exchanging the $a_i$ and $a_{i+1}$, then the maps $\alpha_I$ and $\alpha_I^\prime$ associated to these two orderings fit into a commutative diagram
\[
\qquad
\begin{tikzcd}[row sep=small, column sep=huge]
&\tilde{F}(I) \arrow{dd}{\operatorname{id}^{\otimes(i-1)}\otimes\tau\otimes\operatorname{id}^{\otimes(n-i-1)}}\\
F(I)\arrow[start anchor = north east, end anchor = west]{ur}{\alpha_I} \arrow[start anchor = south east, end anchor = west]{dr}[swap]{\alpha_I^\prime}&\\
&\tilde{F}(I)
\end{tikzcd}
\]
where $\tau\colon V\otimes V\rightarrow V\otimes V$ denotes the twist map given by $\tau(v\otimes w)=(-1)^{|v||w|}w\otimes v$ for all homogeneous elements $v,w\in V$.
\end{remark}

\begin{remark} Let $\tilde{\alpha}_I$ denote the map $\alpha_I$ without the overall shifts of the supergrading on the domain and the codomain. That is, $\tilde{\alpha}_I$ is a map
\[
\tilde{\alpha}_I\colon \Wedge^*(\mbox{Span}_{\C}\{a_1,\ldots,a_n\})\longrightarrow V^{\otimes n},
\]
where the supergrading on the exterior algebra is defined by collapsing the natural $\Z_{\geq 0}$-grading on the exterior algebra to a $\Z_2$-grading. We can now identify the $i$th tensor factor of $V^{\otimes n}$ with $\Wedge^*(\mbox{Span}_{\C}\{a_i\})$ by using the linear map given by $v_+\mapsto 1$ and $v_-\mapsto a_i$. Under this identification, the map $\tilde{\alpha}_I$ becomes the ``obvious'' algebra isomorphism
\[
\tilde{\alpha}_I\colon\Wedge^*(\mbox{Span}_{\C}\{a_1,\ldots,a_n\})\longrightarrow\Wedge^*(\mbox{Span}_{\C}\{a_1\})\otimes\ldots\otimes\Wedge^*(\mbox{Span}_{\C}\{a_n\})
\]
given by sending the generator $a_i$ to the element $1\otimes\ldots\otimes 1\otimes a_i\otimes 1\otimes\ldots 1$. Here, $\otimes$ denotes the supergraded tensor product of supergraded algebras: it is the ordinary tensor product on the level of vector superspaces, but the algebra multiplication is given by $(a\otimes b)\cdot(c\otimes d):=(-1)^{|b||c|}(a\cdot c)\otimes(b\cdot d)$ for all homogeneous algebra elements $a,b,c,d$.
\end{remark}

We will now define a $\gl(1|1)$ action on the vector superspace $\tilde{F}(I)$. To define this action, we will first define a $\gl(1|1)$ action on each tensor factor of \[\tilde{F}(I)=V^{\otimes n}\langle (n+|I|+n_+-2n_--|L|)/2\rangle,\] and then regard $\tilde{F}(I)$ as the tensor product representation (with shifted supergrading).

As in the previous subsection, we will assume that the components of $\cP^I(L)$ are ordered so that the trivial ones precede the essential ones, and that the essential components of $\cP^I(L)$ are ordered according to their proximity to the basepoint $\XX$.

If $V$ is a tensor factor of $\tilde{F}(I)$ which corresponds to a trivial component of $\cP^I(L)$, then we now define the $\gl(1|1)$ action on $V$ to be trivial. If $V$ is a tensor factor of $\tilde{F}(I)$ which corresponds to an essential component $a_i$ with $i-n_t-1$ even, then we identify $V$ with the fundamental representation $V=L_{(1,0)}=\C^{1|1}$ of $\gl(1|1)$. Explicilty, the $\gl(1|1)$ action on such a factor is given by:
\[
\begin{tikzcd}[column sep=huge]
v_+\arrow[out=-150,in=150,loop]{l}{(h_1,h_2)=(1,0)}\arrow[bend right=40]{r}[swap]{f=1}&v_-\arrow[bend right=40]{l}[swap]{e=1}\arrow[out=30,in=-30, loop]{r}{(h_1,h_2)=(0,1)}
\end{tikzcd}
\]

Finally, if $V$ is a tensor factor of $\tilde{F}(I)$ which corresponds to an essential component $a_i$ with $i-n_t-1$ odd, then we identify $V$ with the representation $V^*\langle 1\rangle=L_{(1,0)}^*\langle 1\rangle$ via the map which takes $v_+$ to $v_-^*$ and $v_-$ to $-v_+^*$. Explicitly, the $\gl(1|1)$ action on such a factor is given by:
\[
\begin{tikzcd}[column sep=huge]
v_+\arrow[out=-150,in=150,loop]{l}{(h_1,h_2)=(0,-1)}\arrow[bend right=40]{r}[swap]{f=-1}&v_-\arrow[bend right=40]{l}[swap]{e=1}\arrow[out=30,in=-30, loop]{r}{(h_1,h_2)=(-1,0)}
\end{tikzcd}
\]

In summary, we obtain a $\gl(1|1)$ action on $\tilde{F}(I)$, and we now claim that this action corresponds to the $\gl(1|1)$ action on $F(I)$ defined in the previous subsection.

\begin{lemma}\label{lemma:intertwineactions} For each $I\colon\mathcal{X}\rightarrow\{0,1\}$, the isomorphism $\alpha_I\colon F(I)\rightarrow\tilde{F}(I)$ intertwines the $\gl(1|1)$ actions on $F(I)$ and $\tilde{F}(I)$.
\end{lemma}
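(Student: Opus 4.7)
The plan is to verify directly that $\alpha_I$ intertwines the action of each of the four generators $h_+,h_-,e,f$ of $\gl(1|1)$ on basis elements $a_S$. I would first reduce to the essential part: the trivial tensor factors carry the trivial action on both sides, and $\alpha_I$ respects the factorization of $F(I)$ and $\tilde F(I)$ into a trivial and an essential tensor factor (the overall supergrading shift is irrelevant for the $\gl(1|1)$ action, since $\rho_V=\rho_{V\langle 1\rangle}$). It therefore suffices to verify intertwining on $U' := \Wedge^*(\mbox{Span}_\C\{a_{n_t+1},\ldots,a_n\})$, which $\alpha_I$ sends to the alternating supertensor product $V\otimes V^*\langle 1\rangle\otimes V\otimes\cdots$ with $n_e$ factors, by taking $a_S = a_{i_1}\wedge\cdots\wedge a_{i_\ell}$ to the elementary tensor $v_S$ having $v_-$ at positions $i_1,\ldots,i_\ell$ and $v_+$ elsewhere.

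For $h_+$ and $h_-$, the check is immediate from inspecting the action diagrams. Both on $V$ and on $V^*\langle 1\rangle$, $h_-$ acts by $+1$ on $v_+$ and $-1$ on $v_-$, so the tensor-product action on $v_S$ is scalar multiplication by $n_e-2\ell$, matching $\rho_{U'}(h_-)(a_S) = (N-2\ell)a_S$ with $N=n_e$. For $h_+$, the action is $+1$ on every $V$-factor and $-1$ on every $V^*\langle 1\rangle$-factor, giving the central scalar $\lceil n_e/2\rceil - \lfloor n_e/2\rfloor$, which equals $m$.

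The heart of the argument is $e$ and $f$. The iterated tensor product formula from Subsection~\ref{subs:representations} gives, for a superdegree-$\bar 1$ operator $x$,
\[
\rho(x)(v_1\otimes\cdots\otimes v_n) \;=\; \sum_{i=1}^{n}(-1)^{|v_{i+1}|+\cdots+|v_n|}\,v_1\otimes\cdots\otimes(x v_i)\otimes\cdots\otimes v_n.
\]
For $e$, observe that $e(v_-)=v_+$ with coefficient $+1$ on both $V$ and $V^*\langle 1\rangle$; the tensor-product action on $v_S$ is then $\sum_{r=1}^\ell(-1)^{\ell-r}v_{S\setminus\{i_r\}}$, and this matches $\rho_{U'}(e)(a_S) = a_S\intprodrev a_I = (-1)^{\ell-1}\sum_r(-1)^{r-1}a_{S\setminus\{i_r\}}$, since $\langle a_I, a_{i_r}\rangle = 1$ for every essential $a_{i_r}$. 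For $f$, the coefficient of $v_-$ in $f(v_+)$ is $+1$ on $V$-factors but $-1$ on $V^*\langle 1\rangle$-factors. These signs multiply with the sign $(-1)^{\ell-s+1}$ arising from reordering $a_S\wedge a_i$ into $\pm a_{S\cup\{i\}}$ (where $s$ denotes the rank of $i$ in $S\cup\{i\}$), and the resulting net sign matches exactly the coefficient $(-1)^{i-n_t-1}$ of $a_i$ in $b_I$; thus $\rho_{U'}(f)(a_S) = a_S\wedge b_I$ matches the tensor-product action on $v_S$.

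The main obstacle is the sign bookkeeping: the supertensor formula contributes signs depending on parities of factors lying to the right of the one being acted on, and these must be reconciled with signs coming from the wedge and contraction operations on $U'$, as well as the alternating coefficients in $b_I$. This reconciliation is what forces the alternating choice of $V$ versus $V^*\langle 1\rangle$ for essential tensor factors --- the $-1$ built into $f$'s action on $V^*\langle 1\rangle$-factors is precisely what converts the abstract tensor-product action into the explicit \emph{wedge-with-$b_I$} action on $U'$.
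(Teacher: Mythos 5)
Your proposal is correct and follows essentially the same route as the paper's proof: reduce to the essential tensor factors, verify $h_+$ and $h_-$ by eigenvalue counting, check $e$ and $f$ on single essential factors (where the alternating choice of $V$ versus $V^*\langle 1\rangle$ is designed to reproduce the alternating coefficients of $b_I$), and then observe that the supertensor sign $(-1)^{\ell-s+1}$ agrees with the sign produced by permuting $a_i$ past the wedge factors to its right. One small wording caveat: in your $f$ paragraph you speak of the $V/V^*\langle 1\rangle$ sign ``multiplying with'' the reordering sign $(-1)^{\ell-s+1}$ and the ``net sign'' equalling $(-1)^{i-n_t-1}$; taken literally this conflates signs that live on opposite sides of the equation (supertensor sign on the $\tilde F(I)$ side, wedge-reordering sign on the $F(I)$ side). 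The correct statement, which your final sentence makes clear you understand, is that the two $(-1)^{\ell-s+1}$'s match each other, and separately the $V/V^*\langle 1\rangle$ sign matches the $b_I$ coefficient $(-1)^{i-n_t-1}$.
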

\begin{proof} On both $F(I)$ and $\tilde{F}(I)$, the generator $h_+=h_1+h_2$ acts by multiplication by $m$, where $m$ is equal to $0$ if $n_e$ is even and equal to $1$ if $n_e$ is odd. Likewise, the generator $h_-=h_1-h_2$ acts on both $F(I)$ and $\tilde{F}(I)$ by multiplication by $k$, where $k$ denotes the annular grading. Thus it is clear that $\alpha_I$ intertwines the actions of $h_+$ and $h_-$ and, therefore, also the actions of $h_1$ and $h_2$.

To see that $\alpha_I$ also intertwines the actions of $e$ and $f$, we first note that $e$ and $f$ act trivially on $\Wedge^*\mbox{Span}_{\C}\{a_1,\ldots,a_{n_t}\}$ and also on the first $n_t$ tensor factors of $\tilde{F}(I)$, as these correspond to the trivial components $a_1,\ldots,a_{n_t}$.

Now suppose $a_i$ is an essential component, and suppose that the essential components are ordered according to their proximity to the basepoint $\XX$, as before. If $i-n_t-1$ is even, then the definitions of $a_I$ and $b_I$ imply that $e$ and $f$ act on $\Wedge^*\mbox{Span}_{\C}\{a_i\}$ by the maps $v\mapsto v\intprodrev a_i$ and $v\mapsto v\wedge a_i$, respectively. Explicitly,
\[
\begin{tikzcd}[column sep=huge]
1\arrow[bend right=40]{r}[swap]{f=1}&a_i\arrow[bend right=40]{l}[swap]{e=1}
\end{tikzcd}
\]
and this diagram is consistent with the diagram for the $\gl(1|1)$ action on the $i$th tensor factor of $\tilde{F}(I)$.

Likewise, if $i-n_t-1$ is odd, then $e$ and $f$ act on $\Wedge^*\mbox{Span}_{\C}\{a_i\}$ by the maps $v\mapsto v\intprodrev a_i$ and $v\mapsto v\wedge (-a_i)$, or more explicitly,
\[
\begin{tikzcd}[column sep=huge]
1\arrow[bend right=40]{r}[swap]{f=-1}&a_i\arrow[bend right=40]{l}[swap]{e=1}
\end{tikzcd}
\]
and again this diagram is consistent with the diagram for the $\gl(1|1)$ action on the $i$th tensor factor of $\tilde{F}(I)$.

To complete the proof, we note that if the maps $v\mapsto v\intprodrev a_i$ and $v\mapsto v\wedge (\pm a_i)$ act on a wedge product of the form
\[
a_{i_1}\wedge\ldots\wedge a_{i_r}\wedge\Wedge^*\mbox{Span}_{\C}\{a_i\}\wedge a_{i_s}\wedge\ldots\wedge a_{i_\ell}
\]
for $i_1<\ldots<i_r<i<i_s<\ldots<i_{\ell}$, then they pick up the sign $(-1)^{\ell-s+1}$, where the sign comes from permuting $\pm a_i$ across $a_{i_s}\wedge\ldots\wedge a_{i_\ell}$. Because of our definition of the tensor product of $\gl(1|1)$ representations, the same sign occurs when $e$ and $f$ act on the tensor product
\[
v_{\epsilon_1}\otimes\ldots\otimes v_{\epsilon_{i-1}}\otimes V\otimes v_{\epsilon_{i+1}}\otimes\ldots\otimes v_{\epsilon_n},
\]
where here $\epsilon_1,\ldots,\epsilon_{i-1},\epsilon_{i+1},\ldots,\epsilon_n$ are related to $i_1,\ldots,i_r,i_s,\ldots,i_\ell$ as in the definition of the map $\alpha_I$.
\end{proof}

The next lemma will relate the merge and split maps $F_M$ and $F_\Delta$ to the maps $m\colon V\otimes V\rightarrow V$ and $\Delta\colon V\rightarrow V\otimes V$ defined as follows:
\[
\begin{array}{rcl}
m&=&\begin{cases}
v_+\otimes v_+\longmapsto v_+,&v_+\otimes v_-\longmapsto v_-,\\
v_-\otimes v_-\longmapsto 0,&v_-\otimes v_+\longmapsto v_-,
\end{cases}\\[0.2in]
\Delta&=&\begin{cases}
v_+\longmapsto v_-\otimes v_+-v_+\otimes v_-,\\
v_-\longmapsto v_-\otimes v_-.
\end{cases}
\end{array}
\]
Note that $m$ is homogeneous of superdegree $\bar{0}$ and $\Delta$ is homogeneous of superdegree $\bar{1}$.

\begin{lemma}\label{lemma:intertwineboundary} Let $I_0,I_1\colon\mathcal{X}\rightarrow\{0,1\}$ be two vertices of the resolution hypercube for which there is an oriented edge from $I_0$ to $I_1$. Denote by $a_1,\ldots,a_n$ the components of $\cP^{I_0}(L)$ and by $a_1^\prime,\ldots,a_{n\mp 1}^\prime$ the components of $\cP^{I_1}(L)$, and let $\otimes$ denote the tensor product of homogeneous linear maps defined in Subsection~\ref{subs:representations}.
\begin{enumerate}
\item
If the components $a_i$ and $a_{i+1}$ merge into the component $a_i^\prime$, and if $a_j=a_j^\prime$ for $j<i$ and $a_j=a_{j-1}^\prime$ for $j>i+1$, then
\[
\alpha_{I_1}\circ F_M\circ\alpha_{I_0}^{-1}=\id^{\otimes(i-1)}\otimes m\otimes\id^{\otimes(n-i-1)}.
\]
\item If the component $a_i$ splits into the components $a_i^\prime$ and $a_{i+1}^\prime$, and if $a_j=a_j^\prime$ for $j<i$ and $a_j=a_{j+1}^\prime$ for $j>i$, then
\[
\alpha_{I_1}\circ F_\Delta\circ\alpha_{I_0}^{-1}=\id^{\otimes(i-1)}\otimes \Delta\otimes\id^{\otimes(n-i)},
\]
where it is assumed that the arrow decorating the split region points from $a_i^\prime$ to $a_{i+1}^\prime$.
\end{enumerate}
\end{lemma}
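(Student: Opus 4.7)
The plan is to verify both identities on the distinguished basis $\{a_S\}_{S\subseteq\{1,\ldots,n\}}$ of $F(I_0)$ (equivalently, on the tensor basis $\{v_{\epsilon_1}\otimes\ldots\otimes v_{\epsilon_n}\}$ of $\tilde F(I_0)$ related to it by $\alpha_{I_0}$), reducing each identity to a finite case analysis on the values of $\epsilon_i$ and $\epsilon_{i+1}$.

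For part (1), the merge map is induced by the quotient $V(I_0)\to V(I_0)/(a_i-a_{i+1})\cong V(I_1)$, which sends both $a_i$ and $a_{i+1}$ to $a_i'$ and simply relabels the other components. A four-way case check gives: if both $i,i+1\in S$, then $F_M(a_S)$ contains the factor $a_i'\wedge a_i'=0$, matching $m(v_-\otimes v_-)=0$; if exactly one of $i,i+1$ is in $S$, then the surviving component is sent to $a_i'$, matching $m(v_-\otimes v_+)=m(v_+\otimes v_-)=v_-$; and if neither index lies in $S$, the map is pure relabeling, matching $m(v_+\otimes v_+)=v_+$. Since $m$ has superdegree $\bar 0$ and $F_M$ neither inserts nor reorders wedge factors, no Koszul signs appear on either side.

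For part (2), write $a_S=a_{S_1}\wedge(a_i\text{ if }i\in S)\wedge a_{S_2}$ with $S_1=S\cap\{1,\ldots,i-1\}$ and $S_2=S\cap\{i+1,\ldots,n\}$. Unwinding the three-step definition of $F_\Delta$ and using the ordering convention on $\cP^{I_1}(L)$, the inverse merge identification relabels indices of $S_1$ unchanged and indices of $S_2$ by a shift of $+1$, while sending $a_i$ (if present) to the class $[a_i']=[a_{i+1}']$. The middle map prepends $(a_i'-a_{i+1}')$, which must then be moved past $a_{S_1}$ to reach standard order, producing sign $(-1)^{|S_1|}$ because $(a_i'-a_{i+1}')$ and each factor of $a_{S_1}$ have superdegree $\bar 1$. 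In the case $i\in S$, the identity $(a_i'-a_{i+1}')\wedge a_i'=a_i'\wedge a_{i+1}'$ simplifies the result; in the case $i\notin S$, one obtains $(a_i'-a_{i+1}')$ sandwiched at positions $i,i+1$. On the tensor side, decomposing $\id^{\otimes(i-1)}\otimes\Delta\otimes\id^{\otimes(n-i)}$ according to the convention $(f\otimes g)(v\otimes w)=(-1)^{|g||v|}f(v)\otimes g(w)$ and using $|\Delta|=\bar 1$ produces the Koszul sign $(-1)^{|v_{\epsilon_1}|+\ldots+|v_{\epsilon_{i-1}}|}=(-1)^{|S_1|}$, after which $\Delta(v_+)=v_-\otimes v_+-v_+\otimes v_-$ or $\Delta(v_-)=v_-\otimes v_-$ is applied at positions $i,i+1$. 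Pulling back by $\alpha_{I_1}^{-1}$ then recovers exactly the wedge expressions obtained above.

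The main obstacle is the sign coordination in part (2): both sides produce a factor of $(-1)^{|S_1|}$, one from repositioning the odd element $(a_i'-a_{i+1}')$ within a graded-commutative wedge algebra, the other from the Koszul rule for the graded tensor product of the superdegree-$\bar 1$ map $\Delta$ with identities. The proof reduces to verifying that these two combinatorial sources of signs agree in each of the two subcases $i\in S$ and $i\notin S$, which they do since in both cases the parity counted is the number of $v_-$'s (equivalently, odd generators) in positions $1,\ldots,i-1$.
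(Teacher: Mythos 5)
Your proof is correct and takes essentially the same route as the paper. The paper's own proof asserts that one can "reduce to the case where one of $\cP^{I_0}(L)$ and $\cP^{I_1}(L)$ has exactly two components and the other one has exactly one component" and then does the two-line local check of $F_M$ vs.\ $m$ and $F_\Delta$ vs.\ $\Delta$; what you have done is unpack that reduction, and in particular you make explicit the sign coordination that the paper treats as obvious. Your observation that both sides produce the factor $(-1)^{|S_1|}$ in part (2) — the Koszul sign from $|\Delta|=\bar 1$ crossing the first $i-1$ tensor factors on one side, and the sign from commuting $(a_i'-a_{i+1}')$ past $a_{S_1}$ on the other — is precisely the verification needed to justify the paper's claim that the general statement reduces to the local one.
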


\begin{proof} Recalling the definitions of $F_M$, $F_\Delta$, and $\otimes$, it is easy to see that we can reduce to case where one of $\cP^{I_0}(L)$ and $\cP^{I_1}(L)$ has exactly two components and the other one has exactly one component.

Suppose first that $\cP^{I_0}(L)$ has exactly two components $a_1$ and $a_2$, and that $\cP^{I_1}(L)$ has a single component $a_1^\prime$. Then the map $F_M\colon F(I_0)\rightarrow F(I_1)$ is given by sending each of the components $a_1$ and $a_2$ to $a_1^\prime$. Explicitly,
\[
F_M=\begin{cases}
1\longmapsto 1,&a_1\longmapsto a_1^\prime,\\
a_1\wedge a_2\longmapsto 0,&a_2\longmapsto a_1^\prime,
\end{cases}
\]
and comparing with the definition of $m$, we see that $\alpha_{I_1}\circ F_M\circ\alpha_{I_0}^{-1}=m$.

Now suppose that $\cP^{I_0}(L)$ has a single component $a_1$, and that $\cP^{I_1}(L)$ has exactly two components $a_1^\prime$ and $a_2^\prime$. Then the map $F_\Delta\colon F(I_0)\rightarrow F(I_1)$ is given by first sending $a_1$ to either $a_1^\prime$ or $a_2^\prime$, and then wedge-multiplying the result from the left by $a_1^\prime-a_2^\prime$ (where we assume that the arrow decorating the split region points from $a_1^\prime$ to $a_2^\prime$). Explicitly,
\[
F_\Delta=\begin{cases}
1\longmapsto a_1^\prime-a_2^\prime,\\
a_1\longmapsto a_1^\prime\wedge a_2^\prime,
\end{cases}
\]
and comparing with the definition of $\Delta$, we see that $\alpha_{I_1}\circ F_\Delta\circ\alpha_{I_0}^{-1}=\Delta$.
\end{proof}

We may now regard $m$ and $\Delta$ as maps associated to merge and split operations between resolutions on an annulus. Since, up to a sign in the definition of $\Delta(v_+)$, $m$ and $\Delta$ coincide with Khovanov's multiplication and comultiplication maps, it follows that they exhibit the same behavior with respect to the $k$-grading as the latter maps. In particular, it follows from \cite{LRoberts} that there are decompositions
\[
m=m_0+m_-\quad\mbox{and}\quad\Delta=\Delta_0+\Delta_-,
\]
where $m_0$ and $\Delta_0$ have $k$-degree $0$ and $m_-$ and $\Delta_-$ have $k$-degree $-2$. The explicit form of these decompositions depends on whether the components involved in the merge or split operation are trivial or essential.

If all involved components are trivial, then $m_0=m$ and $\Delta_0=\Delta$.

If a trivial component and an essential component are merged into a single essential component, then the map $v\mapsto m_0(v_+\otimes v)$ is the identity map and the map $v\mapsto m_0(v_-\otimes v)$ is the zero map, where here it is assumed that the first factor in $V\otimes V$ corresponds to the trivial component. If the second factor corresponds to the trivial component, then the same holds true, but with $m_0$ replaced by $m_0\circ\tau$.

If a single essential component is split into a trivial component and an essential component, then the map $\Delta_0$ is given by $v\mapsto v_-\otimes v$, where here it is assumed that the first factor of $V\otimes V$ corresponds to the trivial component. If the second factor corresponds to the trivial component, then the same holds true, but with $\Delta_0$ replaced by $-\tau\circ\Delta_0$.

Finally, if two essential components are merged into a single trivial component, or if a single trivial component is split into two essential components, then the corresponding maps $m_0$ and $\Delta_0$ are given as follows:
\[
\begin{array}{rcl}
m_0&=&\begin{cases}
v_+\otimes v_+\longmapsto 0,&v_+\otimes v_-\longmapsto v_-,\\
v_-\otimes v_-\longmapsto 0,&v_-\otimes v_+\longmapsto v_-,
\end{cases}\\[0.2in]
\Delta_0&=&\begin{cases}
v_+\longmapsto v_-\otimes v_+-v_+\otimes v_-,\\  
v_-\longmapsto 0.
\end{cases}
\end{array}
\]

We can now prove Lemma~\ref{lemma:intertwine} form the previous subsection.

\begin{proof}[Proof of Lemma~\ref{lemma:intertwine}]
In view of Lemmas~\ref{lemma:intertwineactions} and \ref{lemma:intertwineboundary}, it is enough to show that the maps \[\id^{\otimes i-1}\otimes m_0\otimes\id^{\otimes(n-i-1)}\quad\mbox{and}\quad\id^{\otimes (i-1)}\otimes\Delta_0\otimes\id^{\otimes (n-i)}\] intertwine the $\gl(1|1)$ actions, where here the notation is to be understood as in the two parts of Lemma~\ref{lemma:intertwineactions}. Since $\id$ intertwines the $\gl(1|1)$ actions and because of Lemma~\ref{lemma:tensor}, it further suffices to show that $m_0$ and $\Delta_0$ intertwine the $\gl(1|1)$ actions when viewed as factors of these maps.

If $m_0$ and $\Delta_0$ correspond to merges and splits which only involve trivial components, then this is obvious becasue in this case the $\gl(1|1)$ actions are trivial on the domain and the codomain of $m_0$ and $\Delta_0$.

If $m_0$ corresponds to a merge of a trivial component with an essential component, then $m_0$ (or $m_0\circ\tau$) can be described in terms of the identity map or the zero map, depending on whether the trivial component is labeled by $v_+$ or $v_-$, and these maps, too, intertwine the $\gl(1|1)$ action. (Note that if $m_0\circ\tau$ intertwines the $\gl(1|1)$ actions, then so does $m_0$, because $\tau$ is an isomorphism of $\gl(1|1)$ representations).

If further $\Delta_0$ corresponds to splitting an essential component into a trivial and an essential component, then $\Delta_0$ (or $-\tau\circ\Delta_0$) is given by $v\mapsto v_-\otimes v$, and again this map intertwines the $\gl(1|1)$ action.

Finally, if $m_0$ or $\Delta_0$ corresponds to merging to essential components into a single trivial component, or to splitting a single trivial component into two essential components, then $m_0$ and $\Delta_0$ agree with the maps $\tilde{p}$ and $\tilde{i}$ defined in Subsection~\ref{subs:productaction}, and we have already seen that these maps intertwine the $\gl(1|1)$ action.\end{proof}

Using the alternative description of the $\gl(1|1)$ action in terms of the spaces $\tilde{F}(I)$, we can now prove the following stronger version of Theorem~\ref{theorem:invariance}.

\begin{theorem} \label{thm:homequiv} If two annular link diagrams $\cP(L)$ and $\cP(L^\prime)$ differ by an annular Reidemeister move, then the odd annular Khovanov complexes $\ACKh(\cP(L))$ and $\ACKh(\cP(L^\prime))$ are homotopy equivalent as complexes of $\gl(1|1)$ representations.
\end{theorem}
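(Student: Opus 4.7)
The plan is to refine the proof of Theorem~\ref{theorem:invariance}, upgrading each quasi-isomorphism to an explicit chain homotopy equivalence in the category of complexes of $\gl(1|1)$ representations. Since this category is not semisimple, quasi-isomorphisms need not be homotopy equivalences, and one must produce the homotopies by hand. The main tool is the cancellation lemma (Gaussian elimination, in the sense of Bar-Natan): whenever a chain complex $C$ splits $\gl(1|1)$-equivariantly as a direct sum of a subcomplex $C'$ with a contractible complex of the form $\bigl(X \stackrel{\phi}{\longrightarrow} Y\bigr)$ in which $\phi$ is an isomorphism of $\gl(1|1)$ representations, the projection $C \to C'$ is a $\gl(1|1)$-equivariant chain homotopy equivalence whose homotopy inverse and homotopies are written explicitly in terms of the components of the original differential and $\phi^{-1}$.

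Throughout, I would work with the tensor-product description from Subsection~\ref{subs:alternative}, so that every differential decomposes edge by edge as a tensor product of the local maps $m_0$, $\Delta_0$, $\tilde{p}$, $\tilde{i}$, the twist $\tau$, identity maps, and scalar multiplications. By Subsection~\ref{subs:productaction}, Remark~\ref{remark:twist}, and the proof of Lemma~\ref{lemma:intertwine}, each such local piece is a map of $\gl(1|1)$ representations, and by Lemma~\ref{lemma:tensor} this property is preserved under tensor products with identity maps. Consequently, any chain map or chain homotopy built from these local pieces is automatically $\gl(1|1)$-equivariant.

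For each Reidemeister move, I would revisit the argument in the proof of Theorem~\ref{theorem:invariance} and exhibit explicit contractible summands. For RI, the map $D$ splits $\gl(1|1)$-equivariantly because the small circular component $v_0$ carries the trivial representation, so one can peel off a contractible piece $X \stackrel{\phi}{\to} Y$ whose cancelling isomorphism $\phi$ is a local merge tensored with identity; the remaining complement is isomorphic to $\ACKh(\mathcal{D})$ as a complex of $\gl(1|1)$ representations. For RII, the reduction proceeds in two cancellation steps, first cancelling the subcomplex $X \subseteq \ACKh(\mathcal{D}_{10})$ against its isomorphic image in $\ACKh(\mathcal{D}_{11})$, and then cancelling the remaining isomorphism $\ACKh(\mathcal{D}_{00}) \to \ACKh(\mathcal{D}_{10})/X$; in each step the cancelling isomorphism is built from local identity, twist, and merge/comultiplication components, hence $\gl(1|1)$-equivariant. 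For RIII, the acyclic subcomplex $R$ (and its analogue for $\mathcal{D}'$) splits off $\gl(1|1)$-equivariantly since the maps $\Psi$ and $\Phi$ from the proof of Theorem~\ref{theorem:invariance} are vertex-wise scalar multiplications, and the final isomorphism $P \cong P'$ is already stated to be vertex-wise scalar multiplication, hence a map of $\gl(1|1)$ representations. Composing the resulting homotopy equivalences yields $\ACKh(\mathcal{D}) \simeq \ACKh(\mathcal{D}')$.

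The main obstacle is verifying, in each application of the cancellation lemma, that the cancelling isomorphism $\phi$ is $\gl(1|1)$-equivariant with $\gl(1|1)$-equivariant inverse, and that the $\gl(1|1)$-equivariant splittings of $\ACKh(\mathcal{D}_0)$ (for RI), of $\ACKh(\mathcal{D}_{10})$ (for RII), and of the ambient cubes (for RIII) can be chosen compatibly with the differential restricted to the contractible summand. Fortunately, in every case the local isomorphism $\phi$ involves only the trivial circles introduced or destroyed by the move, on which $\gl(1|1)$ acts trivially, so $\phi$ is manifestly $\gl(1|1)$-equivariant with $\gl(1|1)$-equivariant inverse. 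Once these local verifications are in place, the explicit homotopy formula from Gaussian elimination assembles $\gl(1|1)$-equivariant chain homotopies out of $\gl(1|1)$-equivariant local pieces, completing the proof.
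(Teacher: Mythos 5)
Your approach is correct in outline but takes a genuinely different route from the paper's. The paper proves Theorem~\ref{thm:homequiv} by appealing to Putyra's chronological cobordism category $\cobchrono$: Putyra has already constructed homotopy equivalences between generalized Khovanov brackets associated to Reidemeister-equivalent diagrams, living in the additive closure of (an annular version of) $\cobchrono$, and the paper then applies an additive ``annular chronological TQFT'' functor $\mathcal{F}^{ann}_o$ which, as they verify, lands in the category of $\gl(1|1)$ representations. Since additive functors preserve homotopy equivalences, the result follows immediately. Your proposal instead re-derives the homotopy equivalences by hand, redoing the Reidemeister-move arguments of \cite{ORS} step by step via Gaussian elimination and checking $\gl(1|1)$-equivariance of each cancelling isomorphism. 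This is more self-contained and elementary (no reliance on \cite{PutyraChrono}), at the cost of considerably more bookkeeping; the paper's approach is shorter and more conceptual, but imports substantial external machinery.

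One point to tighten: your closing justification, that ``in every case the local isomorphism $\phi$ involves only the trivial circles,'' is accurate for RI and RII, but for RIII the cancellations inside the acyclic complex $R$ (and the intermediate splittings used to pass from $L$ to $P$) can involve edges touching essential circles. The correct reason equivariance still holds in those steps is the one you state earlier in the proposal: every local map appearing in the cubes ($m_0$, $\Delta_0$, $\tilde{p}$, $\tilde{i}$, $\tau$, identities, and scalar multiples thereof) is a $\gl(1|1)$-equivariant map by Subsection~\ref{subs:productaction} and Lemma~\ref{lemma:intertwine}, and these properties persist under tensoring with identities by Lemma~\ref{lemma:tensor} and under taking inverses of isomorphisms. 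You should also be explicit that for RIII you are not merely showing $R$ acyclic but exhibiting a $\gl(1|1)$-equivariant contraction of $R$ (or equivalently a $\gl(1|1)$-equivariant deformation retraction $L \to P$), since in a non-semisimple abelian category a bounded acyclic complex need not be contractible; the Gaussian elimination scheme does supply this, but it is a point that needs to be named rather than elided.
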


\begin{proof} In \cite{PutyraChrono}, Putyra defines a category $\cobchrono$, which generalizes Bar-Natan's category $\cobe$ defined in \cite{MR2174270}. To each decorated link diagram $\cP(L)$, Putyra assigns a chain complex $Kh(\cP(L))$, which lives in the additive closure of $\cobchrono$, and which can be viewed as a generalization of Bar-Natan's formal Khovanov bracket of $\cP(L)$.

Putyra shows that if two link diagrams $\cP(L)$ and $\cP(L^\prime)$ differ by a Reidemeister move, then there is a homotopy equivalence between the generalized Khovanov brackets $Kh(\cP(L))$ and $Kh(\cP(L^\prime))$. Moreover, Putyra constructs a functor, called a chronological TQFT, which takes $Kh(\cP(L))$ to the odd Khovanov complex of $\cP(L)$.

Putyra's construction can be carried out equally well in the annular setting. In particular, if $\cP(L)$ is an annular link diagram, then one can associate an annular version of the generalized Khovanov bracket, which lives in the additive closure of an annular version of the category $\cobchrono$. Moreover, there is a chronological TQFT defined on this annular category, which takes the generalized annular Khovanov bracket of $\cP(L)$ to the odd annular Khovanov complex $\ACKh(\cP(L))$. We will henceforth denote this chronological TQFT by $\mathcal{F}^{ann}_o$. Explicitly, $\mathcal{F}^{ann}_o$ is an additive functor given by assigning the maps $m_0\colon V\otimes V\rightarrow V$ and $\Delta_0\colon V\rightarrow V\otimes V$ to (suitably decorated) annular saddle cobordisms, and the maps $\iota\colon\C\rightarrow V$ and $\epsilon\colon V\rightarrow\C$ given by $\iota(1)=v_+$, $\epsilon(v_+)=0$, and $\epsilon(v_-)=1$ to (suitably decorated) annular cup and cap cobordisms.

We have already seen in the proof of Lemma~\ref{lemma:intertwine} that the maps $m_0$ and $\Delta_0$ intertwine the $\gl(1|1)$ action. Since annular cup and cap cobordisms can only create or annihilate trivial components, and since the $\gl(1|1)$ action is trivial on tensor factors corresponding to such components, it is further clear that the maps $\iota$ and $\epsilon$ also intertwine the $\gl(1|1)$ action. We can thus view the functor $\mathcal{F}^{ann}_o$ as a functor with values in the representation category of $\gl(1|1)$. Since this functor is also additive, it takes the homotopy equivalences that Putyra associates to Reidemeister moves to homotopy categories in the representation category of $\gl(1|1)$, and this proves the theorem.
\end{proof}

\begin{remark} The category $\cobchrono$ defined in \cite{PutyraChrono} comes equipped with a $\Z\times\Z$-grading on its morphism sets. The modulo $2$ reduction of the second $\Z$-factor in this $\Z\times\Z$-grading corresponds to the supergrading used in our definition of the $\gl(1|1)$ action on odd annular Khovanov homology.
\end{remark}

\bibliography{oddannular_gl11_arxiv}

\end{document}